\newcolumntype{P}[1]{>{\centering\arraybackslash}p{#1}}
\newcolumntype{M}[1]{>{\centering\arraybackslash}m{#1}}
\newcolumntype{N}{@{}m{0pt}@{}}
\newcolumntype{L}[1]{>{\raggedright\arraybackslash}m{#1}}
\newcolumntype{R}[1]{>{\raggedleft\arraybackslash}m{#1}}
\newcommand*{\underarrow}{\def\@underarrow{\relax}\@ifstar{\@@underarrow}{\def\@underarrow{\hidewidth}\@@underarrow}}
\newcommand*{\@@underarrow}[2][]{\underset{\@underarrow\substack{\uparrow\if\relax\detokenize{#1}\relax\else\\#1\fi}\@underarrow}{#2}}
\newcommand*{\overarrow}{\def\@overarrow{\relax}\@ifstar{\@@overarrow}{\def\@overarrow{\hidewidth}\@@overarrow}}
\newcommand*{\@@overarrow}[2][]{\overset{\@overarrow\substack{\if\relax\detokenize{#1}\relax\else#1\\\fi\downarrow}\@overarrow}{#2}}
\pgfplotsset{compat=1.14}
\newcommand{\ds}{\displaystyle}
\tikzset{
  jumpdot/.style={mark=*,solid},
  excl/.append style={jumpdot,fill=white},
  incl/.append style={jumpdot,fill=black},
}
\newtheorem{theorem}{Theorem}[section]
\newtheorem{lemma}[theorem]{Lemma}
\theoremstyle{definition}\newtheorem{remark}[theorem]{Remark}
\theoremstyle{definition}
\theoremstyle{definition}
\theoremstyle{definition}
\theoremstyle{definition}\newtheorem{definition}[theorem]{Definition}
\theoremstyle{definition}\newtheorem{question}[theorem]{Question}
\newtheorem{proposition}[theorem]{Proposition}
\newcommand{\N}{\mathbb{N}}
\newcommand{\C}{\mathbb{C}}
\newcommand{\f}{\frac 12}
\DeclareMathOperator{\orb}{orb}
\begin{document}

\title{Ultra hypercyclicity and its connections to mixing properties}
\author{Martin Liu, David Walmsley, James Xue}
\date{\today}
\maketitle

\large

\begin{abstract}
    Recently, two new topological properties for operators acting on a topological vector space were introduced: strong hypercyclicity and hypermixing. We introduce a new property called ultra hypercyclicity and compare it to strong hypercyclicity and hypermixing, as well as the classical notions of mixing, weak mixing, and hypercyclicity. We show that every ultra hypercyclic operator on Fr\'echet space must be weakly mixing, and that there exists a strongly hypercyclic operator which is not ultra hypercyclic. We also characterize, in terms of the weight sequence, the ultra hypercyclic weighted backward shifts on $c_0$ and $\ell^p$, $1\leq p<\infty$. Finally, we improve upon a necessary condition for strongly hypercyclic weighted backward shifts.
\end{abstract}

\section{Introduction}\label{Introduction}
Let $T:X\to X$ be a continuous linear map (henceforth an \textit{operator}) on a separable Fr\'echet space $X$. We are interested in the dynamics, or long-term behavior, of $T$; given some initial point $x\in X$, what can we say about the \textit{orbit of $x$ under $T$}, given by $\orb(T,x)=\{x,Tx,T^2x,T^3x,\ldots\}$? For example, is $x$ \textit{periodic}, meaning $T^n x = x$ for some $n\geq 1$? Or, for quite different behavior, could $\orb(T,x)$ be dense in $X$? When $\orb(T,x)$ is dense, we say $T$ is \textit{hypercyclic} and $x$ is a \textit{hypercyclic vector} for $T$. The presence of a hypercyclic vector and a dense set of periodic points are the two ingredients needed for an operator to be \textit{chaotic}; see \cite[Definition 1.30]{GrossePeris}.

It turns out $\orb(x,T)$ being dense is equivalent to each \textit{return set} $N(x,U)\coloneqq \{n\in \N: T^n x\in U\}$ being non-empty for any non-empty open set $U$ in $X$. With a more topological mindset, we might ask how the iterates of $T$ ``connect" two given non-empty open sets $U$ and $V$ in $X$; given non-empty open subsets $U$ and $V$ of $X$, the \textit{return set} $N(U,V)$ is defined as $N(U,V)=\{n\in \N_0: T^n(U)\cap V\not =\emptyset\}$, where $\N_0=\N\cup \{0\}$. We say $T$ is \textit{topologically transitive} (resp. \textit{weakly mixing}; resp. \textit{mixing}) if for all non-empty open subsets $U$ and $V$ of $X$, $N(U,V)$ is non-empty (resp. contains arbitrarily long intervals; resp. is cofinite). In our setting of a separable Fr\'echet space $X$, it turns out hypercyclicity is equivalent to topological transitivity by the Birkhoff transitivity theorem \cite{Birkhoff1}. Hypercyclicity has become a very active area of research in operator theory that has many connections to other branches of mathematics. For more background on its history, central ideas, and a detailed introduction, we refer the interested reader to the monographs \cite{BayartMatheron,GrossePeris}. 

Given our definitions, and remembering that hypercyclicity and topological transitivity are equivalent, the following implications are immediate:
\begin{center}
    \begin{tabular}{ccccc}
    mixing & $\implies$ & weakly mixing & $\implies$ & hypercyclic. 
\end{tabular}
\end{center}

\noindent Recently in \cite{Ansari1,Ansari2,Ansari3,Ansari4,Curtis}, stronger versions of hypercyclicity and mixing were introduced and studied, namely strong hypercyclicity, supermixing, and hypermixing. We recall these definitions, and introduce the new term ultra hypercyclicity as well.
\begin{definition}
Let $X$ be a separable Fr\'echet space and $T$ be an operator on $X$. 
\begin{itemize}
    \item We say $T$ is \textit{strongly hypercyclic} on $X$ if, for each non-empty open subset $U$ of $X$,
\begin{align*}
    X\setminus \{0\}\subseteq \bigcup_{n=0}^\infty T^n(U).
\end{align*}
    \item We say $T$ is \textit{ultra hypercyclic} if there exists an increasing sequence $(n_k)_{k\geq 1}$ in $\N$ such that, for each non-empty open subset $U$ of $X$,
\begin{align}\label{ultra hyp}
    X = \bigcup_{i=0}^\infty \bigcap_{k=i}^\infty T^{n_k}(U).
\end{align}
When (\ref{ultra hyp}) is satisfied for a given $(n_k)_{k\geq 1}$, we say $T$ is \textit{ultra hypercyclic for $(n_k)_{k\geq 1}$}. And if (\ref{ultra hyp}) is satisfied for $n_k=k$, we say $T$ is \textit{hypermixing}.
    \item We say $T$ is \textit{supermixing} if,  for each non-empty open subset $U$ of $X$,
\begin{align*}
    X = \overline{\bigcup_{i=0}^\infty \bigcap_{n=i}^\infty T^n(U)}.
\end{align*}
\end{itemize}
\end{definition}

\begin{remark}
Because of \cite[Remark 2.5]{Ansari4}, the definition of hypermixing that appears above coincides with the original definition of hypermixing in \cite[Definition 1.1]{Ansari4}.
\end{remark}
The following implications follow quickly from the definitions.

\begin{table}[!h]
\renewcommand{\tablename}{Diagram}
\begin{center}
\caption{The known implications for a continuous operator acting on a Fr\'echet space.}
\label{tab:diagram}
\begin{tabular}{cccccc}
    hypermixing       & $\implies$ &            & supermixing & $\implies$ & mixing\\
    $\Downarrow$      &            &            &             &            & $\Downarrow$\\
    ultra hypercyclic &            &            &             &            & weakly mixing\\
    $\Downarrow$      &            &            &             &            & $\Downarrow$\\
    strongly hypercyclic  &        &            & $\implies$  &            & hypercyclic
\end{tabular}
\end{center}
\end{table}

It is natural to wonder whether any of the implications not present in Diagram \ref{tab:diagram} are true in general. Many are known to fail in general:
\begin{enumerate}[label=\upshape(\roman*)]
    \item That hypercyclicity $\centernot\implies$ weakly mixing is a deep result of De La Rosa and Read \cite{RosaRead}.
    \item \cite[Remark 4.10]{GrossePeris} shows weakly mixing $\centernot\implies$ mixing.
    \item \cite[Example 2.11]{Ansari2} shows that weakly mixing $\centernot\implies$ strongly hypercyclic, and consequently hypercyclicity $\centernot\implies$ strong hypercyclicity.
    \item \cite[Example 4.3]{Ansari4} shows that mixing $\centernot\implies$ strong hypercyclicity and supermixing $\centernot\implies$ hypermixing.
    \item \cite[Theorem 1.4]{Curtis} shows strong hypercyclicity $\centernot\implies$ mixing.
    \item The remarks after \cite[Theorem 2.3]{Curtis} show that mixing $\centernot\implies$ supermixing, since no injective operator can be supermixing.
\end{enumerate}
In this paper, we expand the list above to include the following results:
\begin{enumerate}[label=\upshape(\roman*)]
    \setcounter{enumi}{6}
    \item Ultra hypercyclicity $\implies$ weakly mixing; see Proposition \ref{ultra hyp implies weak mixing}.
    \item Ultra hypercyclicity $\centernot\implies$ mixing; see Theorem \ref{ultra hyp example}.
    \item Strong hypercyclicity $\centernot\implies$ ultra hypercyclicity; see Theorem \ref{diamond example}.
\end{enumerate}

Given these facts, there is only one implication not present in Diagram \ref{tab:diagram} that is unresolved. 
\begin{question}\label{strong hyp imply weak mix}
Must strong hypercyclicity imply the weak mixing property? 
\end{question}
For what it is worth, any irrational circle rotation provides an example of a non-linear dynamical system which is strongly hypercyclic but not weakly mixing; see \cite[Theorem 2.5]{Curtis}. In the linear setting, some progress has been made towards resolving Question \ref{strong hyp imply weak mix}. If $T$ is a strongly hypercyclic operator which is not invertible, then $T$ must satisfy the Hypercyclicity Criterion by \cite[Proposition 1.6]{Ansari3}, and the weak-mixing property is equivalent to satisfying the Hypercyclicity Criterion by an important result from B\'es and Peris \cite[Theorem 2.3]{BesPeris}. Hence every non-invertible strongly hypercyclic operator is weakly mixing. To try and answer Question \ref{strong hyp imply weak mix} in the negative, one then must look for an invertible strongly hypercyclic operator. However, by \cite[Proposition 6]{Ansari1}, $T$ is an invertible strongly hypercyclic operator if and only if every non-zero vector in $X$ is a hypercyclic vector for $T^{-1}$. An operator for which every non-zero vector is a hypercyclic vector is sometimes called \textit{hypertransitive}. Hypertransitive operators are very difficult to construct; the Read operator \cite{Read} is such an example, but it is not surjective. To our knowledge, no known example of an invertible hypertransitive operator exists, and this is one reason why Question \ref{strong hyp imply weak mix} remains open.

\section{Ultra hypercyclicity and weighted backward shifts}\label{backward shifts}

The similarities between the definition of ultra hypercyclic and hypermixing lead immediately to the following ``ultra hypercyclicity criterion." Its proof is entirely similar to the proof of the hypermixing criterion in \cite[Theorem 2.3]{Ansari4}, with the only change in the proof being a replacement of the full sequence $n=1,2,3,\ldots$ with an increasing subsequence $(n_k)_{k\geq 1}$. We omit the details.

\begin{theorem}\label{ultra hyp criterion}
    Suppose $X$ is a first countable topological vector space and $T:X\to X$ is a surjective operator with right inverse map $S$. Then $T$ is ultra hypercyclic for $(n_k)_{k\geq 1}$ if and only if, for every non-zero vector $x\in X$ and any $y\in X$, there exists a sequence $(u_k)_{k\geq 1}$ in $X$ such that for all $k$, $u_k\in \ker T^{n_k}$ and $S^{n_k} x+u_{k}\to y$.
\end{theorem}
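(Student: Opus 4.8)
The plan is to recast both sides of the claimed equivalence in terms of \emph{preimages of $x$} and then match them. The crucial observation is that since $TS = I$ we have $T^{n_k}S^{n_k} = I$, so for any $u_k \in \ker T^{n_k}$ the vector $v_k \coloneqq S^{n_k}x + u_k$ satisfies $T^{n_k}v_k = x$; conversely, every solution of $T^{n_k}v = x$ has this form with $u_k = v - S^{n_k}x \in \ker T^{n_k}$. Hence the condition in the theorem is equivalent to: for every nonzero $x$ and every $y$ there is a sequence $(v_k)$ with $T^{n_k}v_k = x$ and $v_k \to y$. On the other side, I would unwind the definition of ultra hypercyclicity: $y \in \bigcup_{i}\bigcap_{k\ge i}T^{n_k}(U)$ exactly when $y \in T^{n_k}(U)$ for all but finitely many $k$, i.e.\ when for all large $k$ there is $v \in U$ with $T^{n_k}v = y$. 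So ultra hypercyclicity for $(n_k)$ says precisely that for every open $U \neq \emptyset$ and every $z \in X$, preimages of $z$ under $T^{n_k}$ eventually fall in $U$.

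For the forward direction I would use first countability. Fix a nonzero $x$ and a target $y$, and take a decreasing neighborhood base $(W_m)_{m\ge 1}$ of $0$; then $U_m \coloneqq y + W_m$ is a neighborhood base at $y$. Applying ultra hypercyclicity to the open set $U_m$ and the point $x$, there is a threshold $K_m$ (which I may take increasing in $m$) so that for every $k \ge K_m$ some $v \in y + W_m$ has $T^{n_k}v = x$. Then I would diagonalize: for each $k$ let $m(k) = \max\{m : K_m \le k\}$, so $m(k) \to \infty$, choose $v_k \in y + W_{m(k)}$ with $T^{n_k}v_k = x$ (and set $v_k = S^{n_k}x$ for the finitely many $k < K_1$). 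Since $v_k - y \in W_{m(k)}$ and the $W_m$ form a decreasing base, $v_k \to y$, while $T^{n_k}v_k = x$ throughout; taking $u_k = v_k - S^{n_k}x \in \ker T^{n_k}$ finishes this direction.

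For the reverse direction, fix an open $U \ne \emptyset$ and $z \in X$; I must show $z \in T^{n_k}(U)$ for all large $k$. When $z \ne 0$, I apply the criterion with $x = z$ and with $y$ taken to be any point $y_0 \in U$, obtaining $v_k \to y_0$ with $T^{n_k}v_k = z$; since $U$ is open, $v_k \in U$ eventually, so $z \in T^{n_k}(U)$ eventually. The delicate point — and where I expect the main obstacle — is the case $z = 0$, since the criterion is only quantified over \emph{nonzero} $x$ whereas the target space $X$ in the definition of ultra hypercyclicity contains $0$. Here I would manufacture kernel vectors landing in $U$: pick $y_0 \in U$ and any nonzero $x_0$, apply the criterion once with the pair $(x_0, y_0)$ to get $a_k \to y_0$ with $T^{n_k}a_k = x_0$, and once with $(x_0, 0)$ to get $b_k \to 0$ with $T^{n_k}b_k = x_0$. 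Then $w_k \coloneqq a_k - b_k \in \ker T^{n_k}$ and $w_k \to y_0 \in U$, so for large $k$ we have $w_k \in U$ and hence $0 = T^{n_k}w_k \in T^{n_k}(U)$. Together with the $z \ne 0$ case this gives $X = \bigcup_i \bigcap_{k \ge i} T^{n_k}(U)$, completing the equivalence.
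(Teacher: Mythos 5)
Your proof is correct and is essentially the argument the paper intends: the paper omits the details precisely because they are those of the hypermixing criterion in \cite[Theorem 2.3]{Ansari4} with the full sequence $n=1,2,\ldots$ replaced by $(n_k)_{k\geq 1}$, and your coset description $\{v : T^{n_k}v = x\} = S^{n_k}x + \ker T^{n_k}$ together with the first-countability diagonalization is exactly that argument (modulo the cosmetic point that the thresholds $K_m$ should be taken strictly increasing so that $m(k)$ is well defined). Your handling of the point $z=0$ via the kernel vectors $w_k = a_k - b_k \in \ker T^{n_k}$ with $w_k \to y_0 \in U$ is likewise the same device that underlies \cite[Remark 2.5]{Ansari4}, which the paper invokes for precisely this issue.
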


As a consequence of the ultra hypercyclicity criterion, no ultra hypercyclic operator $T:X\to X$ is injective. The reasoning is the same as the argument that no hypermixing operator is injective given in \cite[Remark 2.5]{Ansari4}. We include the details for the sake of completeness.

\begin{remark}\label{ultra hyp not injective}
    No ultra hypercyclic operator $T$ is injective. Towards a contradiction, suppose $T$ were an ultra hypercyclic and injective operator with right inverse $S$. Then the kernel of $T$ contains only the zero vector. Consequently, by Theorem \ref{ultra hyp criterion}, there exists an increasing sequence of positive integers $(n_k)_{k\geq 1}$ such that for every non-zero vector $x\in X$ and any $y\in X$, $S^{n_k} x \to y$. Now let $x\in X$ be non-zero and let $y,z\in X$ with $y\not =z$. Then $S^{n_k} x \to y$ and $S^{n_k} x\to z$, which is impossible.
\end{remark}

We are ready to prove that ultra hypercyclicity always implies the weak mixing property for continuous linear operators on Fr\'echet spaces.
\begin{proposition}\label{ultra hyp implies weak mixing}
    If $T:X\to X$ is a continuous, linear, ultra hypercyclic operator on a Fr\'echet space $X$, then $T$ is weakly mixing.
\end{proposition}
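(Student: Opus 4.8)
The plan is to verify the paper's definition of weak mixing directly: I will show that for every pair of non-empty open sets $U,V\subseteq X$ the return set $N(U,V)$ contains arbitrarily long intervals. The whole argument hinges on the fact that the increasing sequence $(n_k)_{k\geq 1}$ witnessing ultra hypercyclicity is \emph{the same} for every open set $U$; this is what lets a single iterate $T^{n_k}$ meet many approximation demands at once. So fix non-empty open $U,V$ and an arbitrary $m\in\N$, and let $(n_k)_{k\geq 1}$ be the sequence for which $T$ is ultra hypercyclic. My goal is to produce an index $n$ with $\{n,n+1,\dots,n+m\}\subseteq N(U,V)$.

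Before applying ultra hypercyclicity I need the preimages $T^{-j}(V)$ to be non-empty for $0\leq j\leq m$. Since ultra hypercyclicity implies hypercyclicity (Diagram~\ref{tab:diagram}), $T$ is topologically transitive and hence has dense range; a short induction using continuity of $T$ (namely $T\big(\overline{T^{j}(X)}\big)\subseteq\overline{T^{j+1}(X)}$, together with $\overline{T(X)}=X$) then shows $T^{j}(X)$ is dense for every $j\geq 0$. Consequently each $T^{-j}(V)$ is a non-empty open set, and I may choose vectors $y_j$ with $T^{j}y_j\in V$, i.e.\ $y_j\in T^{-j}(V)$, for $j=0,1,\dots,m$.

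Now I apply the defining condition of ultra hypercyclicity to the single open set $U$ and to each target $y_j$: for every $j$ there is an index $i_j$ with $y_j\in\bigcap_{k\geq i_j}T^{n_k}(U)$. Setting $K=\max\{i_0,\dots,i_m\}$ and $n=n_K$, I obtain $y_j\in T^{n}(U)$ simultaneously for all $j$, so for each $j$ there is $u_j\in U$ with $T^{n}u_j=y_j$. Then $T^{n+j}u_j=T^{j}y_j\in V$, which shows $T^{n+j}(U)\cap V\neq\emptyset$, i.e.\ $n+j\in N(U,V)$, for every $j\in\{0,\dots,m\}$. Hence $\{n,\dots,n+m\}\subseteq N(U,V)$, and since $m$ was arbitrary, $N(U,V)$ contains arbitrarily long intervals, giving weak mixing.

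I do not expect a serious obstacle: the strength of ultra hypercyclicity — in particular the \emph{universality} of $(n_k)$ across all open sets — makes the whole interval essentially free once the targets $y_j$ are chosen so that $T^{j}y_j\in V$. The only point requiring genuine care is the non-emptiness of the preimages $T^{-j}(V)$, which is why I first record that ultra hypercyclic operators have dense range. An alternative, equally clean route would be to prove instead that $T\oplus T$ is topologically transitive on $X\times X$ (the standard characterization of weak mixing) by using one common index $n_K$ to connect $U_1\to V_1$ and $U_2\to V_2$ at the same time; the mechanism is identical.
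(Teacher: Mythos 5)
Your proof is correct, but it takes a genuinely different route from the paper. The paper's proof is a three-step citation chain: by its Remark \ref{ultra hyp not injective} (itself a consequence of the ultra hypercyclicity criterion, Theorem \ref{ultra hyp criterion}), no ultra hypercyclic operator is injective; hence $T$ is a non-injective strongly hypercyclic operator, so it satisfies the Hypercyclicity Criterion by Ansari's Proposition 1.6; and satisfying the Hypercyclicity Criterion is equivalent to weak mixing by the B\'es--Peris theorem. You instead verify the paper's definition of weak mixing directly, producing for each pair $U,V$ and each $m$ a block $\{n,n+1,\dots,n+m\}\subseteq N(U,V)$ by choosing targets $y_j\in T^{-j}(V)$ and exploiting the ``eventually in $T^{n_k}(U)$'' structure of $\bigcup_i\bigcap_{k\geq i}T^{n_k}(U)$ to find one common iterate $T^{n_K}$ hitting all $m+1$ targets at once. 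Every step checks out: the preimages $T^{-j}(V)$ are open and non-empty (your dense-range induction is valid, though you could shortcut it: applying the defining equation (\ref{ultra hyp}) with $U=X$ shows every $x\in X$ lies in $T^{n_k}(X)\subseteq T(X)$ for large $k$, so $T$ is outright surjective --- consistent with the paper's remark that strong hypercyclicity forces surjectivity), and the common index $K=\max\{i_0,\dots,i_m\}$ exists because only finitely many targets are in play. What each approach buys: the paper's argument is two lines given the surrounding machinery and situates the result inside the strong hypercyclicity literature, but it leans on the deep B\'es--Peris equivalence, which requires the standing separable Fr\'echet hypotheses. Your argument is elementary and self-contained, uses neither the criterion of Theorem \ref{ultra hyp criterion} nor non-injectivity nor B\'es--Peris, and in fact never uses separability, completeness, or metrizability --- relative to the paper's formulation of weak mixing via return sets containing arbitrarily long intervals, it works on an arbitrary topological vector space. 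One small expository correction: the universality of $(n_k)$ across \emph{all} open sets, which you flag as the hinge of the argument, is not actually what your main proof uses --- only a single $U$ appears, and the leverage comes from the tail-intersection structure for finitely many targets; the shared sequence across distinct open sets is needed only in your alternative sketch proving transitivity of $T\oplus T$, which is also correct.
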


\begin{proof}
    Suppose $T:X\to X$ is a continuous, linear, ultra hypercyclic operator on a Fr\'echet space $X$. By Remark \ref{ultra hyp not injective}, $T$ cannot be injective. Then $T$ must be a non-injective strongly hypercyclic operator, which implies $T$ satisfies the Hypercyclicity Criterion by \cite[Proposition 1.6]{Ansari3}, which implies $T$ is weakly mixing by \cite[Theorem 2.3]{BesPeris}.
\end{proof}


To show that ultra hypercyclicity $\centernot \implies$ mixing, and that strong hypercyclicity $\centernot\implies$ ultra hypercyclicity, we turn to the family of weighted backward shifts. Since the orbit of an element with a weighted backward shift can be computed exactly, the family of weighted backward shifts on $c_0$ and $\ell^p$, $1\leq p<\infty$, is often the testing ground for any new notion in linear dynamics. Recall that $c_0$ is the space of all bounded sequences $(x_n)_{n\geq 0}$ in $\C$ for which $x_n\to 0$, and the norm in $c_0$ is given by $\|x\|=\sup_{n\geq 0} |x_n|$. For $1\leq p<\infty$, $\ell^p$ is the space of all complex sequences $(x_n)_{n\geq 0}$ for which $\sum_{n=0}^\infty |x_n|^p <\infty$, and the norm on $\ell^p$ is given by $\|x\|=(\sum_{n=0}^\infty |x_n|^p)^{1/p}$.

Let $X$ be $c_0$ or one of the $\ell^p$ spaces, $1\leq p<\infty$. We denote by $e_n$, $n\geq 0$, the canonical basis vector of $X$ whose only non-zero term is a $1$ in the $n$th position. Every $x=(x_0,x_1,\ldots)$ in $X$ can thus be written as $x=\sum_{i=0}^\infty x_i e_i$. If $w=(w_n)_{n\geq 1}$ is a bounded sequence of positive numbers, the weighted backward shift $B_w$ is defined on the basis of $X$ by $B_w e_0=0$, $B_w e_n = w_n e_{n-1}$ for $n\geq 1$. The operator $B_w$ is continuous on $X$ if and only if $(w_n)_{n\geq 1}$ is bounded, and it is surjective on $X$ if and only if $\inf\{w_n: n\in \N\}>0$; see \cite[Section 1.4.1]{BayartMatheron}. Since surjectivity is a necessary requirement for strong hypercyclicity (see \cite[Proposition 4]{Ansari1}), we consider here only surjective weighted backward shifts. Each weighted backward shift has an associated weighted forward shift $S$ defined on the basis of $X$ by $S e_n = \frac{e_{n+1}}{w_{n+1}}$. When $B_w$ is surjective, $S$ is defined on all of $X$, and the forward shift is a right inverse for $B_w$ on $X$, meaning $B_w S=I$, where $I$ is the identity operator on $X$. 

For simplicity in notations, we put $M_i^j = w_i w_{i+1}\cdots w_{i+j-1}$ for $i,j\geq 1$; that is to say, $M_i^j$ is the product of $j$ consecutive weights, starting with the $i$th weight. Then for any $x\in X$, a formula for $S^n x$ is given by
\begin{align}\label{forward shift}
    S^n x = (0,0,\ldots,0,\frac{x_0}{M_1^{n}},\frac{x_1}{M_2^{n}}, \frac{x_2}{M_3^{n}},\ldots),
\end{align}
where $x_0/M_1^n$ is in the $n$th position. Furthermore, for any $n\in\N$ and $i<j$, by writing out the products in the following expressions, we have
\begin{align}\label{product formula}
    M_i^{n}M_{i+n}^{j-i} = M_i^{n+j-i}=M_i^{j-i} M_j^{n}
\end{align}

Characterizing conditions, in terms of the weight sequence $w=(w_n)_{n\geq 1}$, for $B_w$ to be hypercyclic or mixing are, respectively,
\begin{align*}
    \text{hypercyclic: } \sup_{n\geq 1} M_1^n = \infty, \hskip .25in \text{mixing: } \lim_{n\to \infty} M_1^n =\infty.
\end{align*}
It is a fact that hypercyclicity and the weak-mixing property are equivalent for the family of weighted backward shifts. For these results and much more background on the dynamics of backward shifts, we refer the interested reader to \cite[Section 4.1]{GrossePeris} and the references therein. 


A characterization of hypermixing weighted backward shifts was obtained by Ansari in \cite{Ansari4}. 
\begin{theorem}[{\cite[Theorems 4.1 and 4.4]{Ansari4}}]\label{hypermixing theorem}
    Let $X$ be $c_0$ or $\ell^p$, $1\leq p<\infty$. For a weighted backward shift $B_w$ on $X$, the following are equivalent.
    \begin{enumerate}[label=\upshape(\roman*)]
        \item $B_w$ is hypermixing.
        \item $B_w$ is surjective and $S^n x\to 0$ for every non-zero vector $x\in X$.
        \item $\sup_{n\geq 1} M_1^n = \infty$ and $\inf_{n,k\geq 1} M_n^k >0$.
    \end{enumerate}
\end{theorem}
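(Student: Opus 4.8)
The plan is to establish the three-way equivalence in two halves: \textbf{(i)} $\iff$ \textbf{(ii)} using the ultra hypercyclicity criterion (Theorem \ref{ultra hyp criterion}) specialized to $n_k=k$, and \textbf{(ii)} $\iff$ \textbf{(iii)} by directly analyzing the forward-shift formula (\ref{forward shift}) in terms of the products $M_i^j$. Throughout I will use that hypermixing means exactly ``ultra hypercyclic for $n_k=k$'', and that $\ker B_w^k = \Span\{e_0,\ldots,e_{k-1}\}$, so the admissible $u_k$ in the criterion are precisely the vectors supported on coordinates $0,\ldots,k-1$, while $S^k x$ is supported on coordinates $\geq k$. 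These two supports are disjoint, which I will lean on repeatedly: in both $c_0$ and $\ell^p$ this gives $\|S^k x\| \leq \|S^k x + u_k\|$.

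For \textbf{(ii)} $\Rightarrow$ \textbf{(i)}, assume $B_w$ is surjective with $S^n x \to 0$ for every non-zero $x$, and verify the criterion: given non-zero $x$ and arbitrary $y=(y_j)_{j\geq 0}$, take $u_k = \sum_{j=0}^{k-1} y_j e_j \in \ker B_w^k$. Then $S^k x + u_k - y = S^k x - \sum_{j\geq k} y_j e_j$, whose norm is at most $\|S^k x\|$ plus the tail norm of $y$; both tend to $0$, so $S^k x + u_k \to y$. For \textbf{(i)} $\Rightarrow$ \textbf{(ii)}, hypermixing implies strongly hypercyclic (Diagram \ref{tab:diagram}), hence surjective; applying the criterion with $y=0$ gives $u_k\in\ker B_w^k$ with $S^k x + u_k \to 0$, and the disjoint-support inequality forces $\|S^k x\| \to 0$.

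For \textbf{(iii)} $\Rightarrow$ \textbf{(ii)}, the hypothesis $\inf_{n,k} M_n^k > 0$ yields $\inf_n w_n = \inf_n M_n^1 > 0$, so $B_w$ is surjective, and a uniform bound $M_{j+1}^n \geq c > 0$. Writing $M_1^n = M_1^N M_{N+1}^{n-N}$ and combining with $\sup_n M_1^n=\infty$ gives $M_1^n\to\infty$, whence $M_{j+1}^n = M_1^{j+n}/M_1^j \to \infty$ for each fixed $j$. Since $(S^n x)_{n+j} = x_j/M_{j+1}^n$, every coordinate of $S^n x$ tends to $0$ while staying dominated by $|x_j|/c$, so dominated convergence on $\ell^p$ (and a finite-head/small-tail split on $c_0$) gives $S^n x \to 0$. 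The easy half of \textbf{(ii)} $\Rightarrow$ \textbf{(iii)}, that $\sup_n M_1^n = \infty$, is immediate, since $\|S^n e_0\| = 1/M_1^n \to 0$ forces $M_1^n\to\infty$.

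The main obstacle is the remaining half of \textbf{(ii)} $\Rightarrow$ \textbf{(iii)}, namely $\inf_{n,k} M_n^k > 0$, which I will prove by contraposition: if $\inf_{n,k} M_n^k = 0$, I construct a non-zero $x$ with $S^n x \not\to 0$. Choose pairs $(n_i,k_i)$ with $\mu_i := M_{n_i}^{k_i} \to 0$. If some fixed index $n_0$ appears infinitely often then $\inf_k M_{n_0}^k = 0$, and $x = e_{n_0-1}$ works since $\|S^{k_i} e_{n_0-1}\| = 1/M_{n_0}^{k_i} \to \infty$; otherwise take the $n_i$ strictly increasing and set $x_{n_i-1} = \sqrt{\mu_i}$ with all other coordinates zero, passing to a subsequence so that $\sum_i \mu_i^{p/2} < \infty$ (for $\ell^p$; on $c_0$ one only needs $\sqrt{\mu_i}\to 0$). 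Then the $(k_i+n_i-1)$-coordinate of $S^{k_i} x$ equals $x_{n_i-1}/M_{n_i}^{k_i} = 1/\sqrt{\mu_i} \to \infty$, so $\|S^{k_i} x\| \to \infty$. The one subtlety is to confirm $k_i \to \infty$ so this genuinely contradicts $S^n x \to 0$; here surjectivity is essential, since $\inf_n w_n \geq c_0 > 0$ gives $M_{n_i}^{k_i} \geq c_0^{k_i}$, and $\mu_i \to 0$ then forces the $k_i$ to be unbounded.
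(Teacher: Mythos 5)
Your proposal is correct, and its skeleton coincides with the paper's de facto proof of this statement: the paper quotes Theorem \ref{hypermixing theorem} from Ansari and instead proves the generalization, Theorem \ref{UH theorem}, whose argument specializes to $n_k=k$. Your (i)$\iff$(ii) via Theorem \ref{ultra hyp criterion} with $\ker B_w^k=\operatorname{span}\{e_0,\dots,e_{k-1}\}$ and the disjoint-support inequality is exactly the paper's, and your (iii)$\Rightarrow$(ii) is the paper's head/tail split, with your dominated-convergence shortcut on $\ell^p$ (plus the bridging observation that $\sup_n M_1^n=\infty$ together with $\inf_{n,k}M_n^k\geq c>0$ forces $M_1^n\to\infty$, which is needed to pass between the two forms of (iii) and which you supply cleanly). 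Where you genuinely diverge is the hard step (ii)$\Rightarrow$(iii): the paper runs a delicate simultaneous induction, using Lemma \ref{basis vector lemma} and the lower bound $M_i^{n}\geq \mu^{n}$ (with $\mu=\inf_n w_n$) to extract \emph{strictly increasing} sequences $(i_l)$ and $(n_{k_l})$ with $M_{i_l}^{n_{k_l}}<2^{-l}$, then sets $x_{i_l-1}=M_{i_l}^{n_{k_l}}$ so that $\|S^{n_{k_l}}x\|\geq 1$. You instead dichotomize on whether some starting index recurs infinitely often among pairs with $M_{n_i}^{k_i}\to 0$: if so, the single basis vector $e_{n_0-1}$ already violates (ii); if not, the starting indices can be taken strictly increasing and the $\sqrt{\mu_i}$ normalization (with a summability subsequence for $\ell^p$) gives $\|S^{k_i}x\|=\mu_i^{-1/2}\to\infty$. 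This buys a simpler construction -- no interleaved choice of two increasing sequences -- and a stronger conclusion (norms diverging rather than merely staying $\geq 1$), while your check that surjectivity forces the $k_i$ to be unbounded, via $M_{n_i}^{k_i}\geq(\inf_n w_n)^{k_i}$, plays precisely the role of the paper's $\mu^{n_j}$ device; it also adapts verbatim to the general sequence version of Theorem \ref{UH theorem}. One cosmetic caution: you denote $\inf_n w_n$ by $c_0$, which collides with the sequence space $c_0$ appearing throughout; rename it.
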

Towards the goal of characterizing ultra hypercyclic weighted backwards shifts in terms of the weight sequence $(w_n)_{n\geq 1}$, we first record a simple fact about surjective backward shifts.

\begin{lemma}\label{basis vector lemma}
Suppose $B_w$ is surjective on $\ell^p$ or $c_0$, and suppose $(n_k)_{k\geq 1}$ is an increasing sequence of positive integers. Then $S^{n_k} e_j\to 0$ for all $j\in \N_0$ if and only if $S^{n_k}e_j\to 0$ for some $j\in \N_0$. In terms of weights, $M_j^{n_k}\to \infty$ for all $j\in \N_0$ if and only if $M_j^{n_k}\to \infty$ for some $j\in\N_0$.
\end{lemma}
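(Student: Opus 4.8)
The plan is to reduce both formulations of the lemma to a single comparison between weight products, and then exploit that the weights are bounded above and below. First I would translate the convergence statements into statements about weights: by formula (\ref{forward shift}), for each fixed $j$ the vector $S^{n}e_j$ has exactly one nonzero coordinate, namely $1/M_{j+1}^{n}$ in position $n+j$, so that $\|S^{n}e_j\| = 1/M_{j+1}^{n}$ in both $c_0$ and $\ell^p$. Hence $S^{n_k}e_j\to 0$ if and only if $M_{j+1}^{n_k}\to\infty$, which shows that the vector version and the weight version of the lemma are really the same statement (up to the index shift $e_j\leftrightarrow M_{j+1}$); it therefore suffices to prove the weight version.

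One direction (``for all'' implies ``for some'') is trivial, so the content is the converse. Suppose $M_{a}^{n_k}\to\infty$ for one index $a$, fix an arbitrary index $b$, and assume without loss of generality that $a<b$ (the case $a>b$ is identical with the roles of $a$ and $b$ reversed). I want to conclude $M_{b}^{n_k}\to\infty$. The heart of the argument is that $M_{b}^{n}$ and $M_{a}^{n}$ differ only by a factor that is bounded away from $0$ and $\infty$ uniformly in $n$. Concretely, using the product identity (\ref{product formula}) I would write $M_a^{n+b-a}=M_a^{n}M_{a+n}^{b-a}$ and $M_a^{n+b-a}=M_a^{b-a}M_b^{n}$, and combine them to obtain the clean relation $M_b^{n}=M_a^{n}\,M_{a+n}^{b-a}\big/M_a^{b-a}$.

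Now the factor $M_{a+n}^{b-a}=w_{a+n}w_{a+n+1}\cdots w_{b+n-1}$ is a product of exactly $b-a$ consecutive weights, and since $B_w$ is surjective and continuous we have $0<c:=\inf_n w_n\le \sup_n w_n=:C<\infty$; hence $c^{\,b-a}\le M_{a+n}^{b-a}\le C^{\,b-a}$ for every $n$. The denominator $M_a^{b-a}$ is a fixed positive constant independent of $n$. Combining these bounds, there exist constants $0<\alpha\le\beta<\infty$, depending on $a$ and $b$ but not on $n$, with $\alpha M_a^{n}\le M_b^{n}\le \beta M_a^{n}$. Consequently $M_a^{n_k}\to\infty$ forces $M_b^{n_k}\to\infty$, and since $b$ was arbitrary this proves that convergence for some index propagates to all indices.

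The only real obstacle is the index bookkeeping in (\ref{product formula}): one must line up the starting positions and lengths so that the comparison factor $M_{a+n}^{b-a}$ is genuinely a fixed-length product of weights whose length $b-a$ does not depend on $n$. Given that, the estimate is immediate, but it is essential to invoke surjectivity (to get $c>0$) and continuity (to get $C<\infty$) precisely so that this factor is trapped between constants uniformly in $n$.
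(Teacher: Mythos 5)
Your proof is correct and takes essentially the same approach as the paper's: both invoke the product identity (\ref{product formula}) to compare $M_b^{n}$ with $M_a^{n}$ via a quotient of fixed-length weight products, trapped between constants using $\inf_n w_n>0$ (surjectivity) and $\sup_n w_n<\infty$ (continuity). The only cosmetic difference is that you record the two-sided comparison $\alpha M_a^{n}\leq M_b^{n}\leq \beta M_a^{n}$, whereas the paper uses just the one-sided bound $M_j^{n_k}\leq (\mu/\delta)^{|i-j|}M_i^{n_k}$, which is all that is needed.
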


\begin{proof}
    Since $B_w$ is continuous and surjective, $\mu=\ds\sup_{n\geq 1} |w_n|<\infty$ and $\ds\delta=\inf_{n\geq 1} |w_n|>0$. By (\ref{forward shift}), it suffices to show $M_j^{n_k}\to \infty$ for some $j$ implies $M_i^{n_k}\to \infty$ for all $i$. Suppose $M_j^{n_k}\to \infty$ and let $i\not =j$. If $i<j$, then rearranging equation (\ref{product formula}) yields $ M_j^{n_k}=\ds\frac{M_{i+n_k}^{j-i}}{M_i^{j-i}} M_i^{n_k}\leq \left(\frac{\mu}{\delta}\right)^{j-i} M_i^{n_k}$, which proves $M_i^{n_k}\to \infty$. If $j<i$, then equation (\ref{product formula}) yields $\ds M_j^{n_k}=\frac{M_j^{i-j}}{M_{j+n_k}^{i-j}} M_i^{n_k}\leq \left(\frac{\mu}{\delta}\right)^{i-j} M_i^{n_k}$, which proves $M_i^{n_k}\to \infty$.
\end{proof}

We employ the previous two results to characterize ultra hypercyclic weighted backward shifts on $c_0$ and $\ell^p$, $1\leq p<\infty$. While the proof mirrors those in \cite[Theorems 4.1 and 4.4]{Ansari4}, we provide these details for the sake of completeness.

\begin{theorem}\label{UH theorem}
Let $X$ be $c_0$ or $\ell^p$, $1\leq p<\infty$, suppose $B_w$ is a weighted backward shift on $X$, and let $(n_k)_{k\geq 1}$ be an increasing sequence of positive integers. Then the following are equivalent.
\begin{enumerate}[label=\upshape(\roman*)]
        \item  $B_w$ is surjective and is ultra hypercyclic for $(n_k)_{k\geq 1}$.
        \item $B_w$ is surjective and for all non-zero $x\in X$, $S^{n_k}x\to 0$.
        \item $\ds\inf_{n\geq 1} w_n>0$, $\ds\lim_{k\to\infty} M_1^{n_k} = \infty$,  and  $\ds\inf_{i,k} M_i^{n_k} >0$.
    \end{enumerate}
\end{theorem}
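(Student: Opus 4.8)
The plan is to establish the equivalences by proving (i) $\iff$ (ii) with the ultra hypercyclicity criterion (Theorem \ref{ultra hyp criterion}), and (ii) $\iff$ (iii) with a soft functional-analytic argument based on the operator norms of the powers $S^{n_k}$. Throughout, I would first dispose of the surjectivity bookkeeping: since $B_w$ is automatically continuous (as $w$ is bounded) and is surjective precisely when $\inf_n w_n>0$, the clause ``$B_w$ is surjective'' in (i) and (ii) is literally the first condition of (iii), so it can be carried along in every direction for free. The right inverse $S$ is then defined on all of $X$ and equals the forward shift, which is exactly the map appearing in Theorem \ref{ultra hyp criterion}.

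For (i) $\iff$ (ii) the key structural observation is that $\ker B_w^{n_k}=\Span\{e_0,\dots,e_{n_k-1}\}$ consists of vectors supported on coordinates $0,\dots,n_k-1$, whereas by (\ref{forward shift}) the vector $S^{n_k}x$ is supported on coordinates $\geq n_k$, so the two supports are disjoint. For (ii) $\implies$ (i), given a non-zero $x$ and an arbitrary target $y\in X$, I would take $u_k$ to be the truncation of $y$ to its first $n_k$ coordinates, so that $u_k\in\ker B_w^{n_k}$ and $S^{n_k}x+u_k-y$ is supported on coordinates $\geq n_k$; its norm is at most $\|S^{n_k}x\|$ plus the norm of the tail of $y$ beyond coordinate $n_k-1$, and both terms tend to $0$ (the first by hypothesis, the second because $y\in X$ and $n_k\to\infty$). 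This verifies the criterion. For (i) $\implies$ (ii), I would apply the criterion with $y=0$: for each non-zero $x$ there are $u_k\in\ker B_w^{n_k}$ with $S^{n_k}x+u_k\to0$, and disjointness of supports gives $\|S^{n_k}x+u_k\|\geq\|S^{n_k}x\|$ (equality of $p$-th powers in $\ell^p$, a max in $c_0$), forcing $S^{n_k}x\to0$.

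For (ii) $\iff$ (iii) I would reduce everything to the identity $\|S^{n_k}\|=1/\inf_{i\geq1}M_i^{n_k}$, read off from (\ref{forward shift}) since $S^{n_k}$ acts as a coordinatewise rescaling by the factors $1/M_{j+1}^{n_k}$. Consequently $\inf_{i,k}M_i^{n_k}>0$ is equivalent to $\sup_k\|S^{n_k}\|<\infty$, i.e.\ to uniform boundedness of the family $\{S^{n_k}\}$. Now (ii) says $S^{n_k}x\to0$ for every $x$; such a pointwise null family is in particular pointwise bounded, so the Banach--Steinhaus theorem yields $\sup_k\|S^{n_k}\|<\infty$, the third condition of (iii), while testing on $e_0$ gives $M_1^{n_k}\to\infty$, the second condition. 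Conversely, assuming (iii), uniform boundedness holds, and $M_1^{n_k}\to\infty$ together with Lemma \ref{basis vector lemma} gives $S^{n_k}e_j\to0$ for every $j$, hence $S^{n_k}z\to0$ on the dense span of the basis vectors; the estimate $\|S^{n_k}x\|\leq\|S^{n_k}(x-z)\|+\|S^{n_k}z\|\leq(\sup_k\|S^{n_k}\|)\|x-z\|+\|S^{n_k}z\|$ then upgrades this to $S^{n_k}x\to0$ for all $x\in X$, which is (ii).

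The step I expect to require the most care is the operator-norm identity $\|S^{n_k}\|=1/\inf_{i\geq1}M_i^{n_k}$ combined with the correct use of Banach--Steinhaus: one must confirm that pointwise convergence to $0$ on all of $X$ genuinely forces uniform boundedness (so that the third condition is necessary, not merely sufficient), and that the density-plus-uniform-bound argument legitimately reverses this. The alternative to Banach--Steinhaus---directly manufacturing a non-zero $x$ with $S^{n_k}x\not\to0$ out of a sequence witnessing $\inf_{i,k}M_i^{n_k}=0$---would also work but would require a delicate passage to a subsequence of coordinates keeping the images in disjoint positions, so I would prefer the softer argument above.
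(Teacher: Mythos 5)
Your proof is correct, and for the hard direction it takes a genuinely different route from the paper. The (i)$\iff$(ii) part coincides with the paper's argument: both use Theorem \ref{ultra hyp criterion} together with the disjoint-support observation that $\ker B_w^{n_k}$ lives on coordinates $0,\dots,n_k-1$ while $S^{n_k}x$ lives on coordinates $\geq n_k$, giving $\|S^{n_k}x\|\leq\|S^{n_k}x+u_k\|$ in one direction and the truncation-of-$y$ construction in the other; likewise your (iii)$\implies$(ii) is essentially the paper's head/tail splitting, with your dense-span phrasing matching their truncation $x=x_N'+x_N$ and the bound $\|S^{n_k}x_N\|\leq\|x_N\|/r$ playing the role of your uniform bound $\sup_k\|S^{n_k}\|<\infty$. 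The real divergence is in (ii)$\implies$(iii), specifically in showing $\inf_{i,k}M_i^{n_k}>0$ is necessary: you read off the exact norm $\|S^{n_k}\|=1/\inf_{i\geq1}M_i^{n_k}$ (valid on $c_0$ and $\ell^p$ since $S^{n_k}$ is a weighted forward shift with weight sequence $1/M_{i}^{n_k}$, and bounded because surjectivity gives $\inf_n w_n>0$) and invoke Banach--Steinhaus: pointwise null implies pointwise bounded implies uniformly bounded. The paper instead proves the contrapositive constructively, running a somewhat delicate induction to extract strictly increasing sequences $(i_l)$ and $(n_{k_l})$ with $M_{i_l}^{n_{k_l}}<2^{-l}$ (using Lemma \ref{basis vector lemma} and the lower bound $M_i^{n_k}\geq\mu^{n_k}$ to force both indices to grow), and then builds an explicit witness $x$ with $x_{i_l-1}=M_{i_l}^{n_{k_l}}$ so that $S^{n_{k_l}}x$ has a coordinate equal to $1$. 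Your soft argument is shorter and sidesteps exactly the subsequence-extraction bookkeeping you flagged as delicate, at the cost of appealing to completeness and the uniform boundedness principle; the paper's construction is elementary and self-contained, produces a concrete vector witnessing the failure of (ii), and mirrors the technique of \cite[Theorems 4.1 and 4.4]{Ansari4} on which the whole section is modeled. Both are complete proofs, and your testing on $e_0$ for the necessity of $M_1^{n_k}\to\infty$ is the same as the paper's.
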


\begin{proof}
(i)$\implies$(ii): Assume $B_w$ is surjective and ultra hypercyclic for $(n_k)_{k\geq 1}$. Let $x=(x_0,x_1,\ldots)\in X$ be non-zero. Since the forward shift $S$ is a right inverse for $B_w$ on $X$, Theorem \ref{ultra hyp criterion} implies the existence of a sequence of vectors $(u_k)_{k\geq 1}$ in $\ker B_w^{n_k}$ such that $S^{n_k}x+u_k\to 0$. The vector $u_k$ must have the form $u_k=(u_{k,0}, u_{k,1},\ldots,u_{k,n_k-1},0,0,\ldots)$. Then
\begin{align*}
    S^{n_k}x+u_k=(u_{k,0}, u_{k,1},\ldots,u_{k,n_k-1},\frac{x_0}{M_1^{n_k}}, \frac{x_1}{M_2^{n_k}},\ldots).
\end{align*}
Hence $\|S^{n_k}x\|\leq \|S^{n_k}x+u_k\|\to 0$.

(ii)$\implies$(i): We again use the ultra hypercyclicity criterion. Let $x\in c_0$ be non-zero and let $y=(y_0,y_1,y_2,\ldots)\in c_0$. Then $S^{n_k}x\to 0$, and the sequence of vectors $(u_k)_{k\geq 1}$ given by $u_k=(y_0,y_1,\ldots,y_{n_k-1},0,0,\ldots)$ belongs to $\ker B_w^{n_k}$ and converges to $y$. Thus $S^{n_k}x+u_k\to 0+y=y$, which shows $B_w$ is ultra hypercyclic for $(n_k)_{k\geq 1}$ by Theorem \ref{ultra hyp criterion}.

(ii) $\implies$ (iii): We prove the contrapositive. If $\inf_{n\geq 1} w_n=0$, then $B_w$ is not surjective and thus not ultra hypercyclic, and we are done. If $\ds\lim_{k\to \infty} M_1^{n_k}\not =\infty$, then $M_1^{n_k}$ has a bounded subsequence, say $M_1^{m_k}<\rho$ for each $k$. Then $S^{n_k}e_0$ cannot converge to zero since $\|S^{m_k}e_0\|=(M_1^{m_k})^{-1}>\rho^{-1}$. Hence (ii) is not satisfied by considering $x=e_0$.

So assume that $B_w$ is surjective, meaning $\inf_{n} w_n >0$, and assume  $\ds\lim_{k\to \infty} M_1^{n_k} =\infty$ but $\inf_{i,k} M_i^{n_k}=0$. Then one can inductively construct strictly increasing sequences $(i_l)_{l\geq 1}$ and $(n_{k_l})_{l\geq 1}$ of positive integers such that $M_{i_l}^{n_{k_l}}<2^{-l}$ as follows. Since $\inf_{i,k} M_i^{n_k}=0$, there exist $i_1$ and $n_{k_1}$ such that $M_{i_1}^{n_{k_1}}<2^{-1}$.

Suppose $i_1<\cdots<i_l$ and $n_{k_1}<\cdots < n_{k_l}$ have been chosen so that $M_{i_j}^{n_{k_j}}<2^{-j}$ for each $1\leq j\leq l$. By Lemma \ref{basis vector lemma}, we have $\lim_{k\to \infty} M_j^{n_k}=\infty$ for each $1\leq j\leq i_l$. Hence there exists $K\in \N$ with $K>k_l$ such that $k>K$ implies $M_j^{n_k}>2^{-(l+1)}$ for each $1\leq j\leq i_l$.

Since $\mu=\inf_n w_n>0$, for any $i\in \N$ we have $M_i^{n_k}\geq \mu^{n_k}$ for each $k\in \N$. Let $\epsilon=2^{-1}\min( \{2^{-(l+1)}\}\cup \{\mu^{n_j}: 1\leq j \leq K\})$. Since $\inf_{i,k} M_i^{n_k}=0$, there exist $i$ and $k$ such that $M_i^{n_k}<\epsilon$. Since $M_i^{n_j}\geq \mu^{n_j}$ for $1\leq j\leq K$, and since $M_i^{n_k}<\mu^{n_j}$ for $1\leq j\leq K$, we must have $k>K$. Recalling that $k>K$ implies $M_j^{n_k}> 2^{-(l+1)}$ for $1\leq j\leq i_l$, the fact that $M_i^{n_k}<2^{-(l+1)}$ implies that $i>i_l$. Thus we have shown there must exist $i_{l+1}>i_l$ and $n_{k_{l+1}}>n_{k_l}$ such that $M_{i_{l+1}}^{n_{k_{l+1}}}<2^{-(l+1)}$, as desired.

We now define $x=(x_n)_{n\geq 0}$ by
\[x_n=\begin{cases}
    M_{i_l}^{n_{k_l}} & \text{ if } n=i_l-1 \hskip .1in (l=1,2,3,\cdots)\\
    0 & \text{ otherwise.} 
\end{cases}\]
Then $x\in X$ since $\sum_{l=1}^\infty M_{i_l}^{n_{k_l}}<\sum_{l=1}^\infty 2^{-l}=1$, and the $(i_l+n_{k_l})$th coordinate of the vector $S^{n_{k_l}}x$ is equal to $\frac{M_{i_l}^{n_{k_l}}}{M_{i_l}^{n_{k_l}}}=1$. Hence $S^{n_k}x$ cannot converge to zero as $k\to \infty$, which shows $B_w$ is not ultra hypercyclic for $(n_k)_{k\geq 1}$.

(iii) $\implies$ (ii): Let $X$ be $c_0$ or $\ell^p$, $p\geq 1$, and denote by $\|\cdot\|$ the usual norm on $X$. Let $(n_k)_{k\geq 1}$ be the sequence described in (iii). There exists some $r>0$ for which $\inf_i M_i^{n_k}\geq r$ for all $k$. One can quickly check that for any $x\in X$, $\|S^{n_k} x\| \leq\frac{\|x\|}{r}$.

Let $x=(x_0,x_1,\ldots)\in X$. Let $\epsilon>0$ be given.  There exists $N\in \N$ such that $\|\sum_{i=N+1}^\infty x_i\| < \frac 12 r\epsilon$. Let $x_N = \sum_{i=N+1}^\infty x_i$ and $x_N'=\sum_{i=0}^N x_i$, so that $x=x_N'+x_N$. Then 
\[\|S^{n_k} x_N\|\leq \frac{\|x_N\|}{r}<\frac{\epsilon}{2}.\] 
Furthermore, $M_j^{n_k}\to \infty$ for each $j=1,2,\ldots,N$ by Lemma \ref{basis vector lemma}. Hence there exists $K\in \N$ such that for $k>K$, 
\[M_j^{n_k} \geq \frac{2\|x\|}{\epsilon}.\]
Let $k>K$. Then $\|S^{n_k} x_N'\|\leq \|x_N'\|\frac{\epsilon}{2\|x\|}\leq \|x\|\frac{\epsilon}{2\|x\|}=\frac{\epsilon}{2}$, and the two previous inequalities imply 
\[\|S^{n_k} x\| \leq \| S^{n_k} x_N \| + \| S^{n_k} x_N'\|<\epsilon,\]
which shows $S^{n_k}x\to 0$ as $k\to \infty$.
\end{proof}

\begin{theorem}\label{ultra hyp example}
There exists an ultra hypercyclic weighted backward shift on $c_0$ and $\ell^p$, $1\leq p<\infty$, which is not mixing.
\end{theorem}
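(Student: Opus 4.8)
The plan is to lean entirely on the characterization in Theorem \ref{UH theorem}: it suffices to produce a bounded weight sequence $(w_n)_{n\geq 1}$ with $\inf_n w_n>0$ together with an increasing sequence $(n_k)$ such that $M_1^{n_k}\to\infty$ and $\inf_{i,k}M_i^{n_k}>0$, while simultaneously arranging $M_1^n\not\to\infty$ so that $B_w$ fails the mixing criterion $\lim_n M_1^n=\infty$. The same sequence then works on $c_0$ and every $\ell^p$ because Theorem \ref{UH theorem} treats these spaces uniformly. I would build $(w_n)$ in consecutive blocks: block $j$ is an ``up-run'' of $a_j$ weights equal to $2$ followed by a ``down-run'' of $b_j$ weights equal to $c_j:=2^{-a_j/b_j}\in(0,1)$, chosen exactly so that the product over block $j$ is $2^{a_j}c_j^{b_j}=1$. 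Writing $s_k$ for the position where the first $k-1$ blocks end and $n_k$ for the top of block $k$ (the position just after its up-run), this forces $M_1^{s_k}=1$ at the end of every complete block while $M_1^{n_k}=2^{a_k}$. Taking $(n_k)$ to be these tops already gives two requirements for free: $M_1^{n_k}=2^{a_k}\to\infty$ once $a_k\to\infty$, and $\liminf_n M_1^n\leq 1$, so $M_1^n\not\to\infty$ and $B_w$ is not mixing.

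The quantitative choice I would make is $a_j=2^j$ and $b_j=2^{j^2+j}$, so that $c_j=2^{-2^{-j^2}}\uparrow 1$ with $\inf_j c_j=c_1=2^{-1/2}>0$; hence all weights lie in $[2^{-1/2},2]$ and $B_w$ is a continuous, surjective backward shift. Passing to $P_n:=\log_2 M_1^n$, the sequence $P$ is the piecewise-linear walk that climbs at slope $+1$ through each up-run, reaching height $H_j:=a_j$ at $n_j$, and falls at the tiny slope $-\rho_j$, $\rho_j=a_j/b_j=2^{-j^2}$, through each down-run, returning to its starting level at $s_{j+1}$. The descent lengths $b_j$ grow doubly-exponentially, and here $n_k=s_k+a_k\approx b_{k-1}$ is enormous compared with every earlier descent but negligible against every later one. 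This is the entire point of the construction: a window of $n_k$ consecutive weights either sits deep inside a single very long, very slow descent $j\geq k$, where the total fall is at most $\rho_j n_k\leq\rho_k n_k=2^{-k}$, or else it is longer than every descent it meets ($j<k$) and so must also straddle a steep up-run, whose gain dominates the loss.

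The hard part is verifying $\inf_{i,k}M_i^{n_k}>0$, i.e.\ that no length-$n_k$ window records a large net loss; equivalently that $\min_{a\geq 0}\bigl(P_{a+n_k}-P_a\bigr)$ is bounded below uniformly in $k$. Here I would use that $a\mapsto P_{a+n_k}-P_a$ is continuous and piecewise linear, so its minimum is attained at a breakpoint, namely where $a$ or $a+n_k$ is a top $n_j$ or a bottom $s_j$. Checking the four families of breakpoints is then a short computation with the chosen parameters: starting at a bottom, or ending at a top, gives net change $\geq 0$ (since $P\geq 0$ and the heights $H_j$ increase); starting at a top $n_j$ lands $n_k$ steps later inside a descent, with net change $-\rho_j n_k\geq -2^{-k}$ when $j\geq k$ and net change $\approx H_k-H_j>0$ when $j<k$ (because $n_j+n_k$ falls inside descent $k$); and ending at a bottom $s_j$ confines the window to the preceding descent, where the height $n_k$ steps earlier is at most $\rho_{j-1}n_k\leq 1$. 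In every case the net loss is at most $1$, so $M_i^{n_k}\geq 2^{-1}$ for all $i,k$.

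The main obstacle, and the only place the doubly-exponential growth is genuinely needed, is the ``boundary'' descent $j=k-1$, whose length $b_{k-1}$ is comparable to $n_k$ and which therefore threatens to be isolated by a single length-$n_k$ window with fall $\rho_{k-1}b_{k-1}=H_{k-1}\to\infty$. The point is that this cannot happen: since $n_k=s_k+a_k>s_k$ exceeds the total length $s_k$ of blocks $1,\dots,k-1$, any window capturing descent $k-1$ and ending at its bottom $s_k$ would have to start at the impossible position $s_k-n_k<0$, so it must instead spill into the steep up-run of block $k$, which more than compensates the loss. Once the uniform bound $\inf_{i,k}M_i^{n_k}\geq 2^{-1}>0$ is in hand, Theorem \ref{UH theorem} shows $B_w$ is surjective and ultra hypercyclic for $(n_k)$, while $M_1^{s_k}=1$ shows it is not mixing, completing the proof.
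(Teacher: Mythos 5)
Your proof is correct, and while it funnels through the same reduction as the paper --- verify condition (iii) of Theorem \ref{UH theorem} for a block-built weight sequence whose cumulative product $M_1^n$ returns to $1$ at the end of every block (killing mixing) and peaks at $M_1^{n_k}\to\infty$ at the chosen times --- your construction and verification are genuinely different in their mechanics. The paper uses only the three weight values $2$, $1$, $\frac{1}{2}$ and dilutes each descent \emph{discretely}: the $\frac{1}{2}$'s in an even block are separated by runs of $s_{n-1}$ ones, so a window of length $n_k$ starting past $s_{2k}$ captures at most one factor of $\frac{1}{2}$, while an early window is handled by a counting argument (any even block it touches drags along the entire following odd block, whose $2$'s outnumber that even block's $\frac{1}{2}$'s). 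You instead dilute \emph{continuously}, taking down-weights $c_j=2^{-2^{-j^2}}\uparrow 1$ so that the log-walk $P_n=\log_2 M_1^n$ descends at slope $\rho_j=2^{-j^2}$, and you verify $\inf_{i,k}M_i^{n_k}>0$ by minimizing the piecewise-linear map $a\mapsto P_{a+n_k}-P_a$ over its four families of breakpoints. Your decisive two-scale comparison ($n_k$ exceeds every earlier descent length $b_j$, $j<k$, yet $\rho_j n_k$ is small for $j\geq k$) plays exactly the role of the paper's spacing-by-$s_{n-1}$ device, and your observation that a window ending at the bottom $s_k$ would have to begin at the impossible position $s_k-n_k<0$ is the precise analogue of the paper's early-window case. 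Your route buys a transparent geometric picture with flexible parameters and a clean uniform bound $M_i^{n_k}\geq \frac{1}{2}$, at the cost of infinitely many distinct weight values and a longer case analysis; the paper's three-valued sequence is more elementary to write down but its counting argument is more ad hoc. One quantitative slip worth fixing: with $a_j=2^j$ and $b_j=2^{j^2+j}$, the intermediate claim $\rho_k n_k\leq 2^{-k}$ fails for small $k$ (e.g.\ $\rho_1 n_1=1$ and $\rho_2 n_2=\frac{5}{8}$); however $\rho_k n_k\leq 1$ holds for every $k$, so your final bound $M_i^{n_k}\geq 2^{-1}$, and with it the theorem, are unaffected.
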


\begin{proof}
    We construct a weight sequence $w=(w_n)_{n\geq 1}$ in a recursive fashion using blocks $b_1,b_2,b_3,$ etc. For odd $n$, $b_n$ will be a string of $2$'s of length $\frac{n+1}{2}$.  To describe the rest of our blocks, let $s_n=\sum_{i=1}^n |b_i|$. For even $n$, the block $b_n$ will have the following properties:
\begin{enumerate}[label=\upshape(\roman*)]
    \item   $b_n$ contains the same number of $\f$'s as the number of $2$'s in the previous block, that number being exactly $\frac{n}{2}$;
    \item $b_n$ ends with a $\f$;
    \item the number of $1$'s before each $\f$ in $b_n$ is $s_{n-1}$.
\end{enumerate}
Writing out the weight sequence through the first several blocks can be visualized as so.
\begin{align*}
    \underbrace{2}_{b_1} \underbrace{1 \f}_{b_2 } \underbrace{22 }_{b_3} \underbrace{11111 \f 11111 \f}_{b_4} \underbrace{222}_{b_5}  \underbrace{ 11111111111111111111 \f 111111 \cdots}_{b_6} 
\end{align*}

Clearly $M_1^n =1$ for infinitely many $n$, which implies $B_w$ is not mixing. What remains to show is that $B_w$ is ultra hypercyclic. Since $B_w$ is surjective, by Theorem \ref{UH theorem}(iii), it suffices to show the existence of an increasing sequence $(n_k)_{k\geq 1}$ of positive integers such that $\ds\lim_{k\to\infty} M_1^{n_k} = \infty$,  and  $\ds\inf_{i,k} M_i^{n_k} >0$.

Let $n_k=s_{2k+1}$, so that $n_k$ is the total number of weights in blocks $b_1$ through $b_{2k+1}$. We have arranged the weight sequence so that $M_1^{n_k}=2^{k+1}$. It remains to show $\ds\inf_{i,k} M_i^{n_k} >0$. 

We claim $\ds\inf_{i,k} M_i^{n_k} >\f$. To prove this, let $i,k\in \N$. If $i>s_{2k}$, then $M_i^{n_k}$ is a product of $n_k$ successive weights starting with a weight in block $b_{2k+1}$ or later. Since any instance of $\f$ in the weight sequence past block $b_{2k+1}$ is separated by at least $n_k=s_{2k+1}$ successive $1$'s, the product $M_i^{n_k}$ contains at most one factor of $\f$, and hence $M_i^{n_k}\geq \f$ in the case $i>s_{2k}$.

Now suppose $1\leq i \leq s_{2k}$ and write $M_i^{n_k}=w_i\cdots w_{s_{2k+1}} w_{s_{2k+2}}\cdots w_{n_k+i-1}$. The weights $w_{s_{2k+2}}, \cdots, w_{n_k+i-1}$ all equal $1$, since they are among the first $n_k$ weights in block $b_{2k+2}$. Thus $M_i^{n_k}=w_i\cdots w_{s_{2k+1}}$. Since $n_k$ is the sum of the lengths of blocks $b_1$ through $b_{2k+1}$, if $w_i\cdots w_{s_{2k+1}}$ contains some weight from an even block $b_{2j}$ with $1\leq j < k$, then $w_i\cdots w_{s_{2k+1}}$ will contain each weight from the odd block $b_{2j+1}$. Since the number of $2$'s in block $b_{2j+1}$ is one more than the number of $\f$'s in block $b_{2j}$, we deduce that $M_i^{n_k}\geq 2$ in the case that $1\leq i \leq s_{2k}$, which finishes the proof.
\end{proof}

We now shift our attention to strong hypercyclicity. Ansari proved the following characterization of strong hypercyclicity for weighted backward shifts on $c_0$ and $\ell^p$, $1\leq p<\infty$. Note, however, that this is not a characterization in terms of the underlying weight sequence.

\begin{theorem}[{\cite[Theorem 4.1]{Ansari4}}]\label{strong hyp thm}
Let $X$ be $c_0$ or $\ell^p$, $1\leq p<\infty$, and suppose $B_w$ is a surjective weighted backward shift on $X$. Then $B_w$ is is strongly hypercyclic if and only if for all non-zero $x\in X$, there exists an increasing sequence $(n_k)_{k\geq 1}$ of positive integers such that $S^{n_k}x\to 0$.
  
\end{theorem}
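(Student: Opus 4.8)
The plan is to reduce strong hypercyclicity to a single explicit distance computation, exploiting that for $B_w$ the full preimage of a point under $B_w^n$ is an affine coordinate subspace. Since $B_w S = I$ gives $B_w^n S^n = I$, and $\ker B_w^n = \Span\{e_0,\dots,e_{n-1}\}$, for any $y\in X$ one has $(B_w^n)^{-1}(y) = S^n y + \ker B_w^n$, so that $y\in B_w^n(U)$ if and only if $U$ meets this affine subspace. First I would record the elementary fact that, in both $c_0$ and $\ell^p$, the distance from a vector $z$ to the coordinate subspace $\ker B_w^n$ equals the norm of the tail of $z$ on positions $\geq n$: one matches the first $n$ coordinates exactly and is left only with the tail, and a one-line coordinatewise computation (in $\ell^p$ on the $p$-sum, in $c_0$ on the supremum) confirms this is optimal. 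Applying this with $z = a - S^n y$ and recalling from (\ref{forward shift}) that $S^n y$ is supported on positions $\geq n$, I obtain the engine of the whole argument: if $a$ is supported on $\{0,\dots,N-1\}$ and $n\geq N$, then the distance from $a$ to $S^n y + \ker B_w^n$ is exactly $\|S^n y\|$.

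For the reverse implication, suppose that for every non-zero $x$ there is an increasing sequence $(n_k)$ with $S^{n_k}x\to 0$. Given a non-empty open $U$ and a non-zero $y$, I first shrink $U$ to an open ball $B(a,\epsilon)\subseteq U$ whose center $a$ is finitely supported, say on $\{0,\dots,N-1\}$; this is possible because finitely supported vectors are dense in $c_0$ and $\ell^p$. Applying the hypothesis to $y$, I choose $k$ with $n_k\geq N$ and $\|S^{n_k}y\|<\epsilon$, which the increasing sequence permits since $n_k\to\infty$ and $S^{n_k}y\to 0$. The distance identity then says $B(a,\epsilon)$ meets $S^{n_k}y + \ker B_w^{n_k}$, i.e.\ $y\in B_w^{n_k}(U)$. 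As $y$ and $U$ were arbitrary, $B_w$ is strongly hypercyclic.

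For the forward implication, assume $B_w$ is strongly hypercyclic and fix a non-zero $y$. Testing the definition against the balls $B(0,1/k)$ (so $a=0$, $N=0$, and every $n\geq 0$ is admissible), the distance identity shows $y\in B_w^n\!\big(B(0,1/k)\big)$ precisely when $\|S^n y\|<1/k$; strong hypercyclicity supplies such an $n$ for each $k$, giving $\inf_n \|S^n y\| = 0$. It then remains to upgrade this infimum to a subsequential limit along indices tending to infinity. Here I would use that $S$ is injective, so $\|S^n y\|>0$ for all $n$: if only finitely many $n$ satisfied $\|S^n y\|<\epsilon_0$ for some $\epsilon_0>0$, then the infimum would be the minimum of finitely many strictly positive numbers together with $\epsilon_0$, hence strictly positive, contradicting $\inf_n\|S^n y\|=0$. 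Thus for every $\epsilon>0$ infinitely many $n$ satisfy $\|S^n y\|<\epsilon$, and extracting one index per dyadic level $\epsilon=2^{-j}$ produces the desired increasing sequence $(n_k)$ with $S^{n_k}y\to 0$.

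The routine part is the two tail-distance computations, one per space, carried out coordinatewise. The genuinely delicate point is the passage from $\inf_n\|S^n y\|=0$ to a limit $S^{n_k}y\to 0$ along $n_k\to\infty$ in the forward direction; I expect this to be the main thing to get right, and the injectivity of the forward shift $S$ is exactly what makes it work. A secondary care point throughout is the reduction of an arbitrary open set to a ball centered at a finitely supported vector, together with the observation that the distance identity is only valid once $n\geq N$, so that the center's support lies within the free coordinates $\{0,\dots,n-1\}$ — this is harmless because both directions only ever invoke large $n$.
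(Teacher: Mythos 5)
Your proof is correct, but it is worth noting that the paper does not prove this statement at all: it is quoted verbatim from Ansari \cite[Theorem 4.1]{Ansari4}, and the closest in-paper analogue is the equivalence (i)$\iff$(ii) of Theorem \ref{UH theorem}, which the authors prove by invoking the abstract ultra hypercyclicity criterion (Theorem \ref{ultra hyp criterion}): there, the existence of $u_k\in\ker B_w^{n_k}$ with $S^{n_k}x+u_k\to y$ is the pivot, and the shift-specific input is the same tail observation you use, namely $\|S^{n_k}x\|\leq\|S^{n_k}x+u_k\|$ because $S^{n_k}x$ lives on coordinates $\geq n_k$ while $u_k$ lives below them. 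Your route bypasses any such criterion and works directly from the definition via the identity $(B_w^n)^{-1}(y)=S^ny+\ker B_w^n$ together with the exact distance formula $\mathrm{dist}(a,\,S^ny+\ker B_w^n)=\|S^ny\|$ for $a$ supported in $\{0,\dots,n-1\}$; this is self-contained and makes the geometry explicit, at the cost of being special to $c_0$ and $\ell^p$ (the criterion route works in any first countable topological vector space with a surjection admitting a right inverse, which is why the paper can reuse it for ultra hypercyclicity). Two points you flag are indeed the ones requiring care, and you handle both correctly: the reduction to balls centered at finitely supported vectors with $n\geq N$ so that $a\in\ker B_w^n$ (your witness $a+S^ny\in B(a,\epsilon)$ is exactly right), and the upgrade in the forward direction from $\inf_n\|S^ny\|=0$ to an increasing sequence $(n_k)$ with $S^{n_k}y\to 0$, where injectivity of $S$ (all weights positive) guarantees $\|S^ny\|>0$ for every $n$ and hence infinitely many indices below each threshold; the criterion-based proof must absorb this same extraction implicitly, so making it explicit is a genuine merit of your write-up.
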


One would hope to expand the above theorem to include a characterization in terms of the weight sequence, similar to Theorem \ref{hypermixing theorem}(iii) and Theorem \ref{UH theorem}(iii). Several necessary conditions for the weight sequence to produce a strongly hypercyclic backward shift are known. First of all, we need $\inf_{n\geq 1} w_n >0$, so that $B_w$ is surjective. Secondly, $B_w$ must be hypercyclic to be strongly hypercyclic, and hence we must have $\sup_{n\geq 1} M_1^n = +\infty$. Another set of necessary conditions was given in the following result.

\begin{proposition}[{\cite[Proposition 2.10]{Ansari2}}]\label{Ansari necessary thm}
    If $B_w$ is strongly hypercyclic on $\ell^p$, $1\leq p<\infty$, then for every increasing sequence $(i_n)_{n\geq 1}$ of positive integers, $\ds\sum_{n=1}^\infty (M_{i_n}^n)^p =\infty$. If $B_w$ is strongly hypercyclic on $c_0$, then for every increasing sequence $(i_n)_{n\geq 1}$ of positive integers, $\ds\lim_{n\to \infty} M_{i_n}^n \not = 0$.
\end{proposition}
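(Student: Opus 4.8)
The plan is to prove both assertions by contraposition, leaning on the characterization of strong hypercyclicity recorded in Theorem~\ref{strong hyp thm}. Since strong hypercyclicity forces surjectivity of $B_w$, that characterization applies, and its negation is exactly what I want: $B_w$ fails to be strongly hypercyclic precisely when there is a \emph{single} non-zero vector $x\in X$ for which no subsequence of $(S^n x)_{n\geq 0}$ converges to $0$, i.e. $\liminf_{n\to\infty}\|S^n x\|>0$. So I would assume the stated weight condition fails and manufacture such a bad vector $x$.

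Negating the hypotheses supplies the raw material. In the $\ell^p$ case, the failure gives an increasing sequence $(i_n)_{n\geq 1}$ with $\sum_{n=1}^\infty (M_{i_n}^n)^p<\infty$; in the $c_0$ case it gives an increasing $(i_n)_{n\geq 1}$ with $M_{i_n}^n\to 0$. (Note that the negation of ``$\lim_n M_{i_n}^n\neq 0$'' is genuine convergence $M_{i_n}^n\to 0$ along the full sequence, indexed by every $n$, which is what the construction needs.) In both cases I would define $x=(x_j)_{j\geq 0}$ by setting $x_{i_n-1}=M_{i_n}^n$ for each $n\geq 1$ and $x_j=0$ otherwise. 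Because $(i_n)$ is strictly increasing and the weights are positive, the loaded positions $i_n-1$ are distinct non-negative integers and $x_{i_1-1}=w_{i_1}>0$, so $x$ is well defined and non-zero. Membership in the space is immediate: $\|x\|_{\ell^p}^p=\sum_n (M_{i_n}^n)^p<\infty$ in the first case, while the nonzero entries $M_{i_n}^n\to 0$ put $x$ in $c_0$ in the second. Finally, the forward-shift formula~(\ref{forward shift}) shows that the coefficient of $e_{i_n+n-1}$ in $S^n x$ equals $x_{i_n-1}/M_{i_n}^n=1$, so $\|S^n x\|\geq 1$ for \emph{every} $n$; hence $\liminf_n\|S^n x\|\geq 1>0$ and $B_w$ is not strongly hypercyclic by Theorem~\ref{strong hyp thm}, completing the contrapositive.

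The main obstacle --- and the reason the construction is not completely immediate --- is that the lower bound on $\|S^n x\|$ must hold for \emph{all} $n$ simultaneously, not merely along a subsequence, since the characterization only requires one subsequence of $(S^n x)$ to vanish. This is precisely why I load the coordinate $i_n-1$ separately for each individual $n$, so that the single iterate $S^n x$ acquires a coordinate equal to $1$, and why the strict monotonicity of $(i_n)$ matters: it guarantees these loads occupy distinct coordinates and therefore assemble into a genuine element of $X$ rather than overwriting one another. Confirming that the reading of ``$\lim\neq 0$'' yields a sequence converging to $0$ (so that every $n$, and not just infinitely many, contributes a unit coordinate) is the one place where I would be careful; with that in hand, the verification of $x\in X$ and of the uniform lower bound is routine.
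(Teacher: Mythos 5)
Your proof is correct, but it takes a genuinely different route from the paper. The paper never proves Proposition \ref{Ansari necessary thm} directly: it treats the statement as Ansari's known result and instead proves the strictly stronger Proposition \ref{c0 necessary theorem}, replacing $M_{i_n}^n$ with $\inf_i M_i^n$, and then recovers the present statement from the one-line inequality $M_{i_n}^n \geq \inf_i M_i^n$. The proof of that stronger proposition must cope with near-minimizing indices $i_n$ that are neither increasing nor injective, which forces the grouping construction with the sets $N_{v_k}=\{n: i_n=v_k\}$ and coordinate values $\bigl(\sum_{n\in N_{v_k}}(M_{v_k}^n)^p\bigr)^{1/p}$ (resp.\ $\max\{M_{v_k}^n: n\in N_{v_k}\}$). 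Your construction is exactly the degenerate case of this in which each $N_{v_k}$ is a singleton: since your $(i_n)$ is strictly increasing, the loaded positions $i_n-1$ are automatically distinct, and placing $x_{i_n-1}=M_{i_n}^n$ gives the unit coordinate of $S^n x$ at position $i_n+n-1$ for \emph{every} $n$, so Theorem \ref{strong hyp thm} applies (after the correct observation that strong hypercyclicity forces surjectivity, so $S$ is defined). You were also right to flag the two subtle points --- that the negation of ``$\lim_n M_{i_n}^n\neq 0$'' is full-sequence convergence to $0$, needed so that \emph{no} subsequence of $(S^nx)$ can vanish, and that strict monotonicity of $(i_n)$ prevents the loads from overwriting one another; these are precisely the hypotheses whose absence makes the paper's stronger version harder. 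In short: your argument is the shorter, self-contained proof (essentially Ansari's original one), while the paper's detour through Proposition \ref{c0 necessary theorem} costs a more delicate construction but buys a sharper necessary condition in which the infimum runs over all $i$ with no monotonicity assumption.
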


We improve the previous result with the following proposition. 

\begin{proposition}\label{c0 necessary theorem}
    If $B_w$ is a strongly hypercyclic weighted backward shift on $\ell^p$ $($respectively on $c_0)$, then $\sum_{n=1}^\infty \big(\inf_i M_i^n)^p=\infty$ $($resp. $\limsup_{n\to \infty} (\inf_i M_i^n) >0)$.
\end{proposition}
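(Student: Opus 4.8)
The plan is to prove the contrapositive: if the stated condition fails, then $B_w$ is not strongly hypercyclic. Since a non-surjective shift is automatically not strongly hypercyclic by \cite[Proposition 4]{Ansari1}, I may assume $B_w$ is surjective, so that $\delta=\inf_n w_n>0$ and hence $c_n:=\inf_i M_i^n\ge\delta^n>0$ for every $n$. Writing the failure explicitly, the $\ell^p$ case means $\sum_{n=1}^\infty c_n^p<\infty$, and the $c_0$ case means $\limsup_n c_n=0$, which (as $c_n\ge 0$) forces $c_n\to 0$. By Theorem \ref{strong hyp thm}, to conclude $B_w$ is not strongly hypercyclic it suffices to produce a single non-zero $x\in X$ for which no increasing sequence $(n_k)$ gives $S^{n_k}x\to 0$; equivalently, one with $\liminf_{n}\|S^n x\|>0$.

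The construction is identical for both spaces. For each $n$ I pick a near-minimizing index $i_n\ge 1$ with $c_n\le M_{i_n}^n\le 2c_n$, possible by definition of the infimum. The point of the improvement over Proposition \ref{Ansari necessary thm} is that I do \emph{not} require $(i_n)$ to be increasing, and the construction must tolerate arbitrary, possibly repeating or non-monotone indices. I therefore define $x=(x_j)_{j\ge 0}$ by $x_j=\sup\{M_{i_n}^n: n\ge 1,\ i_n=j+1\}$, with $x_j=0$ when no $n$ satisfies $i_n=j+1$. Then $x_{i_n-1}\ge M_{i_n}^n$ for every $n$, so by the forward-shift formula (\ref{forward shift}) the entry of $S^n x$ in position $n+i_n-1$ equals $x_{i_n-1}/M_{i_n}^n\ge 1$. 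Hence $\|S^n x\|\ge 1$ for all $n\ge 1$, giving $\liminf_n\|S^n x\|\ge 1>0$, provided $x$ is a genuine non-zero element of $X$.

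It remains to verify $x\in X$ and $x\neq 0$; non-triviality is immediate since $x_{i_1-1}\ge M_{i_1}^1>0$. In the $\ell^p$ case the elementary bound $(\sup_n a_n)^p=\sup_n a_n^p\le\sum_n a_n^p$ gives $\sum_j x_j^p\le\sum_j\sum_{n:\,i_n=j+1}(M_{i_n}^n)^p=\sum_n (M_{i_n}^n)^p\le 2^p\sum_n c_n^p<\infty$, the middle equality holding because each $n$ contributes to the single coordinate $j=i_n-1$. In the $c_0$ case I instead check $x_j\to 0$: given $\epsilon>0$, if $x_j\ge\epsilon$ then some $n$ with $i_n=j+1$ has $M_{i_n}^n\ge\epsilon$, which forces $c_n\ge\epsilon/2$; since $c_n\to 0$, only finitely many $n$ satisfy this, so only finitely many coordinates $j$ have $x_j\ge\epsilon$, and therefore $x_j\to 0$.

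I expect the main obstacle to be exactly this $c_0$ membership step, because the minimizing indices $i_n$ need neither be monotone nor tend to infinity: a large coordinate $j$ might be $i_n-1$ for some \emph{small} $n$ whose value $c_n$ is not small, so decay cannot be read off by following $n\mapsto i_n$. The sup-over-fibers definition of $x$, combined with the fact that only finitely many $n$ keep $c_n$ bounded away from $0$, is precisely what resolves this, and it is also the mechanism that lets us replace Ansari's quantification over increasing sequences by the single quantity $\inf_i M_i^n$. Once $x\in X$ is established, this $x$ witnesses the failure of the characterization in Theorem \ref{strong hyp thm}, so $B_w$ is not strongly hypercyclic, which is the contrapositive we wanted.
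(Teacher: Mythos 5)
Your proof is correct and takes essentially the same route as the paper's: both argue the contrapositive, select near-minimizing indices $i_n$ (yours multiplicative, $M_{i_n}^n\le 2c_n$, justified by first reducing to the surjective case; the paper's additive, $M_{i_n}^n<\inf_i M_i^n+2^{-n}$), build a single vector supported on the coordinates $i_n-1$ (you take the sup over each fiber $\{n: i_n=j+1\}$ where the paper takes the fiber's $\ell^p$-sum, resp.\ max) so that $\|S^n x\|\ge 1$ for all $n$, verify $x\in X$ by the same fiber-decomposition of $\N$, and conclude via Theorem \ref{strong hyp thm}. The only cosmetic slip is in your $c_0$ step: since $x_j$ is a supremum, $x_j\ge\epsilon$ only yields some $n$ in the fiber with $M_{i_n}^n>\epsilon/2$, hence $c_n>\epsilon/4$, after which your counting argument (injectivity of $j\mapsto n$ plus finiteness of $\{n: c_n>\epsilon/4\}$) goes through unchanged.
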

\begin{proof}
    \sloppy We prove the contrapositive. Assume that $\ds\sum_{n=1}^\infty \big(\inf_i M_i^n)^p<\infty$ (resp. $\limsup_{n\to \infty} (\inf_i M_i^{n} )=0$). For each $n\in \N$, there exists $i_n\in \N$ such that $M_{i_n}^n < \inf_i M_i^n + 2^{-n}$. Thus $\sum_{n=1}^\infty (M_{i_n}^n)^p < \infty$ (resp. $\lim_{n\to \infty} M_{i_n}^n = 0$). Let $V=\{i_n: n\in \N\}$, and enumerate $V$ in increasing order as $\{v_1,v_2,\ldots\}$.  

    For each $k$, let $N_{v_k}=\{n\in \N: i_n = v_k\}$. Observe $\N$ is the disjoint union of the $N_{v_k}$ sets. Define $x$ in $\ell^p$ (resp. in $c_0$) by 
    \[ x_{i} = \begin{cases}
        \left(\ds\sum_{n\in N_{v_k}} (M_{v_k}^n)^p\right)^{1/p} & \text{ if } i=v_k-1 \hskip .1in (k=1,2,3,\cdots)\\
        0 & \text{ else } 
    \end{cases} \]
    
    \[\left( \text{resp. } x_{i} = \begin{cases}
        \max\{M_{v_k}^n : n\in N_{v_k}\} & \text{ if } i=v_k-1 \hskip .1in (k=1,2,3,\cdots)\\
        0 & \text{ else } 
    \end{cases}\right).\]
    We first prove such an $x$ has the property that $\|S^n x\|\geq 1$, and then we check that the $x$ defined above is actually in $\ell^p$ (resp. in $c_0$).

    Let $n\in \N$.  Then $n$ belongs to some $N_{v_j}$, and the vector $S^{n}x$ contains the term
    \[ \frac{\Big(\ds\sum_{n\in N_{v_j}} (M_{v_j}^n)^p \Big)^{1/p}}{M_{v_j}^n} \hskip .25in \left( \text{resp. } \frac{\max\{M_{v_j}^n : n\in N_{v_j}\}}{M_{v_j}^{n}} \right)\]
    in the $(v_j-1+n)$th coordinate. Since each fraction above is greater than or equal to $1$ in magnitude, we have $\|S^n x\|\geq 1$, which shows $B_w$ cannot be strongly hypercyclic by Theorem \ref{strong hyp thm}.

    It remains to check that the $x$ defined above belongs to $\ell^p$ (resp. $c_0$). For the $\ell^p$ case, since $\N$ is the disjoint union of the $N_{v_k}$ sets, we have
    \[\sum_{i=0}^\infty |x_i|^p =\sum_{k=1}^{\infty}\sum_{n\in N_{v_k}} (M_{v_k}^n)^p = \sum_{n=1}^\infty (M_{i_n}^n)^p <\infty,\]
    which shows $x\in \ell^p$.
    
    Now consider the $c_0$ case. If $V$ is a finite set, then the vector $x$ has a finite number of non-zero terms, which clearly implies $x\in c_0$. So suppose $V$ is infinite. Based on the definition of $x$, it suffices to show $\max\{M_{v_k}^n : n\in N_{v_k}\}\to 0$ as $k\to \infty$. 

    Let $\epsilon>0$. Since $\lim_{n\to \infty} M_{i_n}^n = 0$, we can choose $N\in \N$ such that $M_{i_n}^n < \epsilon$ for all $n>N$. Then if $\min N_{v_k}>N$, it must be that $\max\{M_{v_k}^n : n\in N_{v_k}\}<\epsilon$. Thus it suffices to prove $\min N_{v_k}\to +\infty$ as $k\to \infty$, and this follows from the fact that the $N_{v_k}$ sets are pairwise disjoint. We must have $\min N_{v_k}\not = \min N_{v_j}$ whenever $k\not =j$, and hence the sequence of pairwise distinct positive integers $(\min N_{v_k})_{k\geq 1}$ must diverge to $+\infty$, which finishes the proof.
\end{proof}

To see that Proposition \ref{Ansari necessary thm} is indeed a consequence of Proposition \ref{c0 necessary theorem}, let $(i_n)_{n\geq 1}$ be an increasing sequence of positive integers. Then $M_{i_n}^n \geq \inf_i M_i^n$, so if $\sum_{n=1}^\infty \big(\inf_i M_i^n)^p=\infty$ (resp. $\limsup_{n\to \infty} (\inf_i M_i^n) >0$), then $\ds\sum_{n=1}^\infty (M_{i_n}^n)^p =\infty$ (resp. $\ds\lim_{n\to \infty} M_{i_n}^n \not = 0$).

We next derive a sufficient condition for a weighted backward shift on $\ell^p$ or $c_0$ to be strongly hypercyclic. 
\begin{proposition}\label{sufficent for SH}
    Suppose $B_w$ is a weighted backward shift on $\ell^p$ or $c_0$, and there exists $a>0$ and a function $f:(0,a)\to (0,\infty)$ with $\lim_{x\to 0^+} f(x)=+\infty$ such that for all $\epsilon \in (0,a)$ and for all $N\in \N$, there exists $n\in \N$ such that the following conditions hold:
    \begin{enumerate}[label=\upshape(\alph*)]
        \item $M_i^n > f(\epsilon)$ for all $i\leq N$, and
        \item $\epsilon<M_i^n$ for all $i$.
    \end{enumerate}
    Then $B_w$ is strongly hypercyclic.
\end{proposition}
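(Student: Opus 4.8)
The plan is to apply the characterization of strong hypercyclicity in Theorem \ref{strong hyp thm}, so the first task is to verify its hypothesis, namely that $B_w$ is surjective. Since $B_w$ is an operator, $\mu := \sup_n w_n < \infty$. Fixing any $\epsilon_0 \in (0,a)$ and $N=1$, condition (b) supplies an $n_0$ with $M_i^{n_0} > \epsilon_0$ for every $i$. Writing $M_i^{n_0} = w_i w_{i+1}\cdots w_{i+n_0-1} \le w_i\,\mu^{n_0-1}$ and rearranging gives $w_i > \epsilon_0\,\mu^{-(n_0-1)}$ for all $i$, so $\inf_i w_i > 0$ and $B_w$ is surjective. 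By Theorem \ref{strong hyp thm}, it then suffices to produce, for each non-zero $x \in X$, an increasing sequence $(n_k)_{k\geq 1}$ with $S^{n_k}x \to 0$.

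Fix a non-zero $x\in X$ and a target $\eta>0$. The key observation is that conditions (a) and (b) are tailored to control, respectively, the head and the tail of $S^n x$, whose $i$th data coordinate is $x_i/M_{i+1}^n$ by (\ref{forward shift}). First I would choose $\epsilon\in(0,a)$ small enough that $f(\epsilon)$ is large, which is possible since $\lim_{x\to 0^+}f(x)=+\infty$; concretely $f(\epsilon)$ should exceed a multiple of $\|x\|/\eta$. With $\epsilon$ fixed, I would then choose $N$ so large that the tail $\sum_{i\ge N}|x_i|^p$ (resp.\ $\sup_{i\ge N}|x_i|$ in $c_0$) is as small as needed, using $x\in X$. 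Applying the hypothesis to this pair $(\epsilon,N)$ yields an $n$ satisfying both (a) and (b).

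The estimate then splits $\|S^n x\|$ into the head $0\le i\le N-1$ and the tail $i\ge N$. On the head, (a) gives $M_{i+1}^n>f(\epsilon)$, so that part is bounded by $\|x\|/f(\epsilon)$ (resp.\ $\|x\|^p/f(\epsilon)^p$), which is small because $f(\epsilon)$ is large; on the tail, (b) gives $M_{i+1}^n>\epsilon$, so that part is bounded by $\epsilon^{-1}\sup_{i\ge N}|x_i|$ (resp.\ $\epsilon^{-p}\sum_{i\ge N}|x_i|^p$), which is small by the choice of $N$. Combining the two yields $\|S^n x\|<\eta$, the $c_0$ and $\ell^p$ computations being identical up to replacing a supremum by a sum.

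Finally I would upgrade this to an increasing sequence. Running the argument with $\eta=1/k$ forces the chosen bound on $f(\epsilon_k)$ to grow, hence $f(\epsilon_k)\to\infty$; since (a) requires $M_1^{n_k}>f(\epsilon_k)$ while always $M_1^{n_k}\le\mu^{n_k}$, we get $\mu^{n_k}>f(\epsilon_k)\to\infty$, and as $\mu<\infty$ this forces $n_k\to\infty$. Thus the exponents produced are unbounded, and extracting a strictly increasing subsequence gives an increasing $(n_k)$ along which $k\to\infty$, so that $\|S^{n_k}x\|<1/k\to 0$. This completes the verification via Theorem \ref{strong hyp thm}. I expect the only genuinely delicate point to be this last step---guaranteeing that the exponents can be taken arbitrarily large rather than merely obtaining a single $n$---while the head/tail estimate itself is routine bookkeeping once one reads conditions (a) and (b) as head- and tail-controls.
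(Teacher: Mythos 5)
Your proposal is correct and follows essentially the same route as the paper's proof: invoke Theorem \ref{strong hyp thm}, split $\|S^n x\|$ into a head controlled by condition (a) via $f(\epsilon)$ and a tail controlled by condition (b) via $\epsilon$, and let $\epsilon_k\to 0$. The one genuine addition is that you make explicit two points the paper leaves implicit---that (b) yields $\inf_i w_i>0$ and hence surjectivity, and that (a) together with $M_1^n\leq \mu^n$ forces the exponents to be unbounded, justifying the extraction of an increasing sequence $(n_k)$---both of which are correct and tighten the argument.
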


\begin{proof}
    Let $x=(x_0,x_1,x_2,\ldots)$ be a non-zero element of $\ell^p$ (resp. of $c_0$). We must show there exists an increasing sequence $(n_k)_{k\geq 1}$ in $\N$ such that $\|S^{n_k} x\|\to 0$. Let $\epsilon_k=a/(k+1)$. Since $\epsilon_k\to 0$ as $k\to \infty$, we have $\frac{\|x\|}{f(\epsilon_k)} + \epsilon_k\to 0$. Hence it suffices to show that for each $k\in\N$, there exists an $n$, arbitrarily large, such that 
    \begin{align}\label{sufficientinduction}
        \|S^{n}x\| \leq \frac{\|x\|}{f(\epsilon_k)} + \epsilon_k.
    \end{align}
    
    Since $x\in \ell^p$ (resp. $x\in c_0$), there exists a value $N\in \N$ such that
    \begin{align*}
        \Big(\sum_{i>N}^\infty|x_{i-1}|^p\Big)^{1/p} < (\epsilon_k)^2 \hskip .25in \big(\text{resp. }i > N \Rightarrow |x_{i-1}|\leq (\epsilon_k)^2 \text{ if } x\in c_0 \big).
    \end{align*}
    Then by our assumptions, there exists $n\in \N$ such that
    \begin{enumerate}
        \item[(A)] $M_i^{n}> f(\epsilon_k)$ for all $i\leq N$, and
        \item[(B)] $\epsilon_k<M_i^{n}$ for all $i$.
    \end{enumerate}
    Then inequality (A) implies 
    \[\Big(\sum_{i=1}^{N}\frac{|x_{i-1}|^p}{M_i^{n}}\Big)^{1/p} \leq \frac{1}{f(\epsilon_k)} \Big(\sum_{i=1}^{N}|x_{i-1}|^p\Big)^{1/p}\leq \frac{\|x\|}{f(\epsilon_k)} \hskip .25in \big( \text{resp. } i\leq N\Rightarrow \frac{|x_{i-1}|}{M_i^{n}}\leq \frac{\|x\|}{f(\epsilon_k)}  \big).\]
    Furthermore, (B) implies
    \[\Big(\sum_{i>N}\frac{|x_{i-1}|^p}{M_i^{n}}\Big)^{1/p}\leq \frac{1}{\epsilon_k} \Big(\sum_{i>N}|x_{i-1}|^p\Big)^{1/p}\leq \frac{(\epsilon_k)^2}{\epsilon_k}=\epsilon_k  \big( \text{resp. } i>N \Rightarrow \frac{|x_{i-1}|}{M_i^{n}}\leq \frac{(\epsilon_k)^2}{\epsilon_k}=\epsilon_k  \big).\]
    Since 
    \[S^{n}x = (0,0,\ldots,0,\frac{x_0}{M_1^{n}},\frac{x_1}{M_2^{n}},\ldots, \frac{x_{N-1}}{M_{N}^{n}},\frac{x_{N}}{M_{N+1}^{n}}, \frac{x_{N+1}}{M_{N+2}^{n}},\ldots),\]
    combining the previous two inequalities yields that, whether $x\in \ell^p$ or $x\in c_0$,
    \begin{align*}
        \|S^{n} x\| & \leq \left\| \left(\frac{x_0}{M_1^{n}},\frac{x_1}{M_2^{n}},\ldots, \frac{x_{N-1}}{M_{N}^{n}},0,0,\ldots\right)\right\| + \left\| \left(0,\cdots,0,\frac{x_{N}}{M_{N+1}^{n}}, \frac{x_{N+1}}{M_{N+2}^{n}},\ldots\right)\right\| \\
        & \leq \frac{\|x\|}{f(\epsilon_k)} + \epsilon_k,
    \end{align*}
    as desired.
\end{proof}

With the sufficient condition in Proposition \ref{sufficent for SH}, we are able to provide an example of a strongly hypercyclic weighted backward shift $B_w$ on $\ell^p$ and $c_0$ which is not ultra hypercyclic. We are grateful to Fedor Petrov who generously provided the ideas that led to the discovery of the following example.
\begin{theorem}\label{diamond example}
    There exists an ultra hypercyclic weighted backward shift on $c_0$ and $\ell^p$, $1\leq p<\infty$, which is strongly hypercyclic but neither ultra hypercyclic nor mixing.
\end{theorem}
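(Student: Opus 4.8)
The plan is to exhibit one explicit weight sequence $w=(w_n)_{n\ge 1}$, assembled from blocks whose entries lie in $\{\tfrac12,1,2\}$ (so that $B_w$ is automatically continuous and surjective, since the weights are bounded and bounded away from $0$), and to verify three properties of $B_w$ on $X=c_0$ or $\ell^p$: it is not mixing, it is strongly hypercyclic, and it is not ultra hypercyclic. It is convenient to pass to the logarithmic partial sums $P_n=\log_2 M_1^n=\sum_{j\le n}\log_2 w_j$ (with $P_0=0$), so that $\log_2 M_i^n=P_{i-1+n}-P_{i-1}$ is the displacement of the length-$n$ ``window'' starting at position $i$. In these terms the targets read: (not mixing) $P_n\not\to\infty$, which I force by arranging $M_1^n=1$, i.e. $P_n=0$, for infinitely many $n$, so that by the mixing criterion $\lim_n M_1^n=\infty$ fails; (strongly hypercyclic) I will verify the sufficient condition of Proposition~\ref{sufficent for SH}; and (not ultra hypercyclic) by Theorem~\ref{UH theorem} it suffices to show that no increasing sequence $(n_k)$ can have both $M_1^{n_k}\to\infty$ and $\inf_{i,k}M_i^{n_k}>0$.

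For non-ultra-hypercyclicity the cleanest route I would pursue is to build $w$ so that a large leading product forces a small minimal product at the \emph{same} scale: concretely, to guarantee a uniform bound $M_1^n\cdot\inf_i M_i^n\le C$ for all $n$, equivalently $P_n+\min_{j\ge 0}(P_{j+n}-P_j)\le\log_2 C$. Granting this, every $n$ with $\inf_i M_i^n\ge r$ satisfies $M_1^n\le C/r$, so $\{\,M_1^n:\inf_i M_i^n\ge r\,\}$ is bounded for each $r>0$, and Theorem~\ref{UH theorem} rules out ultra hypercyclicity; meanwhile this is entirely compatible with $\sup_n M_1^n=\infty$ (hence hypercyclicity). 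Mechanically the bound asks that each steep climb contributing a large $M_1^n$ be mirrored, at the \emph{same} window length $n$, by an equally deep descent elsewhere in the sequence, which is the ``diamond'' symmetry of the excursions.

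For strong hypercyclicity I would take $f(\epsilon)=1/\epsilon$ (so $f(\epsilon)\to\infty$ while the required leading climb is only $P_n\gtrsim k$, keeping $P_n$ bounded at the shallow window lengths demanded above) and produce, for each $\epsilon=2^{-k}$ and each $N$, a single length $n$ with $M_i^n>f(\epsilon)$ for all $i\le N$ and $M_i^n>\epsilon$ for all $i$. The blocks should alternate \emph{steep} climbs (runs of $2$'s) with very long, \emph{gentle} descents (long runs of $1$'s carrying sparse $\tfrac12$'s), so that over a window of the chosen length the descent loses at most $k$ (yielding $M_i^n>\epsilon$ for every $i$) while many consecutive initial windows still register the full steep climb (yielding $M_i^n>f(\epsilon)$ for $i\le N$); the climbs and descents are balanced so that $P_n=0$ recurs, giving not-mixing for free. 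The verification would then mimic the coordinate-by-coordinate estimates in the proofs of Theorems~\ref{UH theorem} and~\ref{ultra hyp example}.

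The step I expect to be the main obstacle is reconciling the two window-length demands at a \emph{single} scale $n$. Condition (a) of Proposition~\ref{sufficent for SH} wants the first $N$ length-$n$ windows to record a large climb, which (because the initial segment of $w$ cannot be an unbounded run of $2$'s without making $B_w$ mixing) forces $n$ to grow with $N$ and the climb to sit beyond the first coordinates; yet the non-ultra-hypercyclicity mechanism wants every large $M_1^n$ to be accompanied by a length-$n$ window of very small product. Arranging a long window to be simultaneously ``steeply rising from the first $N$ positions'' and ``shallow everywhere'' pins down the admissible geometry, and the delicate point is to make the descents gentle enough that arbitrarily long windows can still be shallow (so the strong-hypercyclicity construction survives for all $N$) while keeping each high excursion deep enough to plant the matching low window that defeats ultra hypercyclicity. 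I would resolve this by choosing block lengths recursively: at stage $k$, fix a climb of height $\approx k$ and plateaus long enough to absorb the current value of $N$, then a descent long enough that its per-window loss stays below every previously committed threshold; this recursive balancing of heights against descent lengths is presumably exactly where the construction suggested by Petrov does its work.
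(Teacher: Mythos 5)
Your reduction is the right one --- surjectivity plus weights in $\{\frac12,1,2\}$, non-mixing via $M_1^n=1$ infinitely often, strong hypercyclicity via Proposition~\ref{sufficent for SH}, and non-ultra-hypercyclicity by showing no sequence $(n_k)$ can satisfy both conditions of Theorem~\ref{UH theorem}(iii) --- and your proposed mechanism, a uniform bound $M_1^n\cdot\inf_i M_i^n\le C$ for all $n$, does correctly rule out ultra hypercyclicity by the argument you give. But the proposal stops exactly where the proof has to start: no weight sequence is ever written down, and the step you yourself flag as ``the main obstacle'' --- producing, for every $k$ and every $N$, a single length $n$ at which the first $N$ windows all climb past $f(\epsilon)$ while \emph{every} window of length $n$ stays above $\epsilon$, simultaneously with a matching deep window of length exactly $n$ for \emph{each} $n$ where $M_1^n$ is large --- is deferred to an unspecified ``recursive balancing.'' That balancing is the entire content of the theorem, and it is not routine: your parameter choice $f(\epsilon)=1/\epsilon$ actually collides with your own bound, since condition (a) at $i=1$ gives $M_1^n>1/\epsilon$, whence your bound forces $\inf_i M_i^n\le C\epsilon$, while condition (b) demands $\inf_i M_i^n>\epsilon$; so at every stage the minimal window product must be pinned inside the band $(\epsilon, C\epsilon]$, which a block construction with ``gentle'' descents (per-window loss below all committed thresholds) cannot obviously hit --- the descent must in fact lose \emph{almost exactly} $\log_2(1/\epsilon)$ over some window of the special length, at every scale.

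The paper resolves this not with climb/descent blocks but with an exactly self-similar sequence: $w_n=2$ for $n$ odd and $w_n=(2w_{n/2})^{-1}$ for $n$ even, giving the identities $M_i^k=(M_{2i-1}^{2k})^{-1}=(M_{2i}^{2k})^{-1}$, the scaling law $M_j^{4^mk}=M_i^k$ for $j\in\{4^m(i-1)+1,\dots,4^mi\}$, and, crucially, $M_{n+1}^n=(M_1^n)^{-2}$ --- your ``diamond symmetry'' in a precise \emph{squared} form. Along the lengths $a_k=4a_{k-1}+1$ one gets $M_1^{a_k}=2^k$ and $\inf_i M_i^{a_k}\ge 4^{-k}$, and the scaling law supplies arbitrarily long initial plateaus ($n=4^ma_k$ with $4^m\ge N$) for free. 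The squared relation is what dictates the paper's choice $f(\epsilon)=(4\sqrt{\epsilon})^{-1}$: with $M_1^n\approx\epsilon^{-1/2}$ one has $\inf_i M_i^n\approx(M_1^n)^{-2}\approx\epsilon$, leaving a clean margin between conditions (a) and (b) rather than the razor-thin band your $f(\epsilon)=1/\epsilon$ requires. In short, your skeleton matches the paper's, but the construction and its verification --- the heart of the result --- are missing, and the quantitative shape of $f$ would need to change for any construction of this type to close.
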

\begin{proof}
Consider the weight sequence $w=(w_n)_{n\geq 1}$ defined as follows: if $n$ is odd then $w_n=2$, and if $n$ is even then $w_n=(2w_{\frac{n}{2}})^{-1}$. In other words, $$w_n=\begin{cases}
        2 & \text{ if } n \text{ is odd},\\
        2 & \text{ if } n \text{ is even and } w_{\frac{n}{2}}=\frac{1}{4},\\
        \frac{1}{4} & \text{ if } n \text{ is even and } w_{\frac{n}{2}}=2.
    \end{cases}$$

First, we develop several properties that this weight sequence has. It is immediate from the definition of $w_n$ that, for any $i\in \N$,
\begin{equation}\label{twoinverse}
    w_{2i-1}w_{2i}=\frac{1}{w_i}.
\end{equation}
By repeatedly applying this identity to each factor in $(M_i^k)^{-1}$, for any $i,k\in \N$, we have
\[(M_i^k)^{-1}=\frac{1}{w_i}\cdots \frac{1}{w_{i+k-1}}=w_{2i-1}w_{2i}\cdots w_{2i+2k-3}w_{2i+2k-2}=M_{2i-1}^{2k}.\]
Furthermore, since $2i+2k-1$ and $2i-1$ are both odd, it follows that
\[M_{2i}^{2k}=\frac{w_{2i+2k-1}}{w_{2i-1}}M_{2i-1}^{2k}=\frac{2}{2}M_{2i-1}^{2k}=M_{2i-1}^{2k}\]
for any $i,k\in \N$.  Combining these last two observations yields, for any $i,k\in \N$,
\begin{align}\label{mtwoinverse1}
    \frac{1}{M_{i}^k} & = M_{2i-1}^{2k},\\ \label{mtwoinverse2}
    \frac{1}{M_{i}^k} & = M_{2i}^{2k}.
\end{align}

We claim these two previous identities yield the following four identities for any $i,k\in \N$:
\begin{equation}\label{mfourinverse}
    M_i^k=M_{4i-3}^{4k}=M_{4i-2}^{4k}=M_{4i-1}^{4k}=M_{4i}^{4k}.
\end{equation}
We must check that $M_i^k=M_{4i-2a-b}^{4k}$ for any $a,b\in\{0,1\}$. Indeed, one of the identities (\ref{mtwoinverse1}) and (\ref{mtwoinverse2}) gives $M_i^k=(M_{2i-a}^{2k})^{-1}$, and then by applying one of (\ref{mtwoinverse1}) and (\ref{mtwoinverse2}) to $M_{2i-a}^{2k}$, we also have $(M_{2i-a}^{2k})^{-1}=M_{2(2i-a)-b}^{2(2k)}=M_{4i-2a-b}^{4k}$. This proves all four identities in (\ref{mfourinverse}).

We now prove, by induction on $m\in\N$, the identity
\begin{equation}\label{lengthtwo}
    M_j^{4^mk}=M_i^k\quad\text{for }j\in\{4^m(i-1)+1,\dots,4^mi\}
\end{equation}
for any $i,k\in \N$. The basis for induction follows immediately from (\ref{mfourinverse}). For the induction step, assume (\ref{lengthtwo}) holds for each positive integer less than or equal to $m$, and let $j\in \{4^{m+1}(i-1)+1,\dots,4^{m+1} i\}$. There exist integers $l_1\in \{1,\ldots,4^m\}$ and $l_2\in \{0,1,2,3\}$ such that $j=4^{m+1}(i-1)+4l_1-l_2$. Let $j'=4^m(i-1)$. Since $l_2\in\{0,1,2,3\}$, one of the identities in (\ref{mfourinverse}) implies $M_{j'+l_1}^{4^m k}=M_{4(j'+l_1)-l_2}^{4^{m+1}k}=M_{j}^{4^{m+1}k}$. And by the induction hypothesis, since $j'+l_1\in \{4^m(i-1)+1,\dots,4^mi\}$, we have $M_{j'+l_1}^{4^m k}=M_i^k$. By (\ref{mfourinverse}) it follows that $M_j^{4^{m+1}k}=M_i^k$, which completes the induction step and proves (\ref{lengthtwo}) holds for any $m,i,k\in \N$.

We are now ready to prove $B_w$ is strongly hypercyclic but not ultra hypercyclic. We set $a_1=1$, and define $a_k=4a_{k-1}+1$ for each $k\ge 2$. We claim that for every $k\in\N$, \begin{equation}\label{fact1}
    M_1^{a_k}=2^k.
\end{equation} 
To see this, we induct on $k$. The base case $k=1$ is clear. Let $k\ge 2$ and assume $M_1^{a_i}=2^i$ for every positive integer $i<k$. Since $a_k$ is odd, by definition of $w_n$, we have $w_{a_k}=2$. Applying (\ref{mfourinverse}) with $i=1$ and $a_{k-1}$ in place of $k$, we also have $M_1^{4a_{k-1}}=M_1^{a_{k-1}}$. Hence 
\[M_1^{a_k}=M_1^{a_k-1}\cdot w_{a_k}=M_1^{4a_{k-1}}\cdot 2=M_1^{a_{k-1}}\cdot 2=2^{k-1}\cdot 2=2^k.\]
This finishes the induction step and proves (\ref{fact1}). 

We also claim for every $k\in\N$,
\begin{equation}\label{fact2}
    \inf_iM_i^{a_k}\ge4^{-k}.
\end{equation}
Again we induct on $k$, and the base case is clear from definition of $w_n$. Let $k\ge 2$ and assume $\inf_iM_i^{a_j}\ge4^{-j}$ for every positive integer $j<k$. Since $M_i^{a_k}=M_i^{a_k-1}w_{i+a_k-1}$ for every positive integer $i$, since $\inf_i w_i=\frac{1}{4}$, and since $a_k-1=4a_{k-1}$, we have 
\[\inf_iM_i^{a_k}\ge\inf_i\frac{1}{4}M_i^{a_k-1}=\inf_i\frac{1}{4}M_i^{4a_{k-1}}.\] 
Observe that (9) implies for every $i\in\mathbb{N}$, there exists some $j\in\mathbb{N}$ such that $M_i^{4a_{k-1}}=M_j^{a_{k-1}}$. Hence it follows that
\[\inf_i\frac{1}{4}M_i^{4a_{k-1}}\ge\inf_i\frac{1}{4}M_i^{a_{k-1}}\geq \frac{1}{4}\cdot\frac{1}{4^{k-1}},\]
where the last inequality follows from the induction hypothesis. This proves (\ref{fact2}).

Next we verify the conditions of Proposition \ref{sufficent for SH} are satisfied, which will imply that $B_w$ is strongly hypercyclic. Define $a=1/4$ and $f:(0,a)\to(0,\infty)$ by $f(x)=(4\sqrt{x})^{-1}$. It is clear that $\lim_{x\to 0^+}f(x)=\infty$. Let $\epsilon\in(0,a)$ and let $N\in\N$. Choose a positive integer $k$ such that $2^{k-1}\le(4\sqrt{\epsilon})^{-1}<2^{k}$, and choose a positive integer $m$ such that $4^m\ge N$. Let $n=4^ma_{k}$. By (\ref{lengthtwo}) and (\ref{fact1}) we have:
\[M_i^{n}=2^k>\frac{1}{4\sqrt{\epsilon}}=f(\epsilon)\quad\text{for }i\in\{1,2,\dots,4^m\}.\]

Let $i\in\N$. Choose $j\in\N$ such that $i\in\{4^m(j-1)+1,\dots,4^mj\}$. Note that $2^{k-1}\le(4\sqrt{\epsilon})^{-1}$ implies $\epsilon\le4^{-(k+1)}<4^{-k}$. Hence by (\ref{lengthtwo}) and (\ref{fact2}) we have
$$M_i^n=M_j^{a_k}\ge\frac{1}{4^k}>\epsilon.$$
Since $i\in \N$ was arbitrary, the conditions of Proposition \ref{sufficent for SH} are satisfied, which proves $B_w$ is strongly hypercyclic.

Now we show that $B_w$ is not ultra hypercyclic by showing condition (iii) of Theorem \ref{UH theorem} is not satisfied. Note that (\ref{mtwoinverse1}) implies for every $n\in\N$, $$\frac{1}{M_1^{n}}=M_1^{2n}=M_1^nM_{n+1}^n, $$ so for every $n\in\N$,
\begin{equation*}\label{petrov}
    M_{n+1}^{n}=\frac{1}{(M_1^{n})^2}.
\end{equation*}
Therefore, if $(n_k)_{k\geq 1}$ is such that $\lim_{k\to \infty}M_1^{n_k}=\infty$, then we must also have $\lim_{k\to \infty}M_{n_k+1}^{n_k}=0$. Hence if $(n_k)_{k\geq 1}$ satisfies $\lim_{k\to \infty}M_1^{n_k}=\infty$, then we must have $\inf_{i,k} M_i^{n_k}=0.$ Thus $B_w$ cannot be ultra hypercyclic by Theorem \ref{UH theorem}, nor can it be mixing since $\lim_{n\to \infty} M_1^n \not = \infty$.
\end{proof}

For the family of weighted backward shifts, every property in Diagram \ref{tab:diagram} other than strong hypercyclicity has a characterization in terms of properties the weight sequence has. We invite the reader to try and characterize the strongly hypercyclic weighted backward shifts in terms of the weight sequence, and we ask whether the necessary conditions in Theorem \ref{c0 necessary theorem} are also sufficient for strong hypercyclicity on $\ell^p$ and $c_0$.

\begin{question}
    Suppose $B_w$ is a surjective, hypercyclic weighted backward shift on $\ell^p$, $1\leq p<\infty$ (resp. on $c_0$). If $\ds\sum_{n=1}^\infty \big(\inf_i M_i^n)^p=\infty$ (resp.  $\ds\limsup_{n\to \infty} \big(\inf_{i\geq 1} M_i^{n} \big)>0$), must $B_w$ be strongly hypercyclic?
\end{question}

As pointed out by the referee, one could also explore the size of the set of ultra hypercyclic weighted backward shifts with respect to the strongly hypercyclic ones in a topological sense. For example, Chan and Sanders showed in \cite{ChanSanders} that the chaotic backward shifts are a dense subset of the set of all backward shifts on $\ell^2$ in the strong operator topology. In fact, there is an SOT-dense path of such operators. One might ask similar questions about ultra hypercyclic shifts.

\begin{question}\label{ref question}
    Is the set of ultra hypercyclic weighted backward shifts SOT-dense in the set of strongly hypercyclic weighted backward shifts on $\ell^2$? Is it a $G_\delta$ subset?
\end{question}

\section{Acknowledgments}
We are grateful to Fedor Petrov for providing the ideas that led to Example \ref{diamond example}. We also thank the St. Olaf College Center for Undergraduate Research and Inquiry for supporting the project. We also thank the referee for pointing out Question \ref{ref question}, and for providing many helpful suggestions which improved the presentation of the results.
\bibliography{Bib}

\end{document}